\newcommand{\vf}{\varphi}
\newcommand{\ve}{\varepsilon}
\theoremstyle{plain}
\newtheorem{theorem}{Theorem}
\newtheorem{corollary}[theorem]{Corollary}
\newtheorem{lemma}[theorem]{Lemma}
\newtheorem{proposition}[theorem]{Proposition}
\theoremstyle{definition}
\newtheorem{definition}{Definition}[section]
\newtheorem{example}{Example}[section]
\theoremstyle{remark}
\newtheorem{remark}{Remark}[section]
\numberwithin{equation}{section} \numberwithin{theorem}{section}
\title{A Noether theorem for random locations}   
\begin{document}

\author[1]{Shunlong Luo}
\author[2]{Jie Shen}
\author[2]{Yi Shen}
\affil[1]{\footnotesize Academy of Mathematics and Systems Science, Chinese Academy of Sciences, China}
\affil[2]{\footnotesize Department of Statistics and Actuarial Science, University of Waterloo, Canada \normalsize}
\date{}
\maketitle

\begin{abstract}
We propose a unified framework for random locations exhibiting some probabilistic symmetries such as stationarity, self-similarity, etc. A theorem of Noether's type is proved, which gives rise to a conservation law describing the change of the density function of a random location as the interval of interest changes. We also discuss the boundary and near boundary behaviour of the distributions of the random locations.
\end{abstract}

\textit{2010 Mathematics Subject Classification}: Primary 60G10,  58J70, 70H33\\
\indent\textit{Key words and phrases}: random locations, Noether theorem, probabilistic symmetries

\section{Introduction}
The famous Noether theorem in mathematical physics \cite{noether1918} shows that each differentiable symmetry of a system corresponds to a conservation law. The most important and immediate examples include translation in space and the conservation of momentum, translation in time and the conservation of energy, rotation in space and the conservation of angular momentum, \textit{etc}. A thorough review of Noether theorem can be found in the book by Kosmann-Schwarzbach \cite{kosmann2011noether}.

Since the last two decades of the twentieth century, various works have been carried out to extend Noether theorem to stochastic settings. Just to name a few, Yasue \cite{yasue1981stochastic} proposed a theory for stochastic calculus of variations, and obtained a corresponding generalization of the Noether theorem. Misawa \cite{misawa1994conserved} considered the conservative quantities and symmetry for stochastic dynamical systems described by certain type of stochastic differential equations. Thieullen and Zambrini proved a version of the Noether theorem, in which they associated a function giving a martingale to each family of transformations exhibiting certain symmetry \cite{thieullen1997symmetries}. They also extended the Noether theorem to diffusion processes in $\mathbb R^3$ whose diffusion matrix is proportional to identity \cite{thieullen1997probability}. Entering the new century, van Casteren \cite{van2003hamilton} obtained a version of the stochastic Noether theorem using the ideas and backgrounds from stochastic control. More recently, Baez and Fong \cite{Baez2013} considered Markov processes and found an analogy of the classical Noether theorem in this setting. Along this direction, Gough, Ratiu and Smolyanov \cite{gough2015noether} gave a Noether theorem for dissipative quantum dynamical semi-groups. Another scenario where an external random force exists was studied by Luzcano and de Oca \cite{lezcano2017stochastic}.

The random locations of stochastic processes exhibiting certain probabilistic symmetries have been studied in a series of works in the past years. In \cite{samorodnitsky2013intrinsic}, Samorodnitsky and Shen introduced a large family of random locations called ``intrinsic location functionals'', which include the location of the path supremum, the first/last hitting time to a fixed level, \textit{etc}. It was shown that the distribution of any random location in this family for a stationary process must satisfy a specific set of conditions. Similar results were later established between a subclass of intrinsic location functionals and stochastic processes with stationary increments \cite{shen2016random}. In \cite{shen2018self}, the stochastic processes combining both a scaling symmetry and a stationarity of the increments were studied, and it is shown that stronger conditions hold for the distribution of its path supremum over an interval.

As the research of random locations progressed, it became clearer and clearer that there is a general correspondence between probabilistic symmetries and classes of random locations, such that the distributions of the random locations behave in a very specific way under the corresponding symmetry. Indeed, it is not difficult to see that the setting for the random locations of stochastic processes having probabilistic symmetries is similar to the settings in which Noether theorems hold, in that they are both systems with infinitesimally generated symmetries. This observation leads to the question as whether a result of Noether's type exists for the random locations. There is, however, a critical difference: in the case of random locations, the symmetries are only in the distributional sense. While the overall distribution of the processes, hence also the distributions of the random locations, remain invariant after the corresponding transformations, the values of the locations do evolve with the transformations in each realization. As a result, the mathematical tools used to derive the Noether theorems for deterministic systems can not be applied to get similar results here. It turns out that the methods developed in the literatures previously mentioned are not helpful as well.

The goal of this paper is, therefore, to provide a framework which contains the aforementioned random locations and probabilistic symmetries as special cases, and in which a Noether theorem can be established. To this end, we generalize the notion of random location by dissociating it from the paths of stochastic processes. More precisely, the random locations are no longer functionals of the paths as in \cite{samorodnitsky2013intrinsic, shen2016random, shen2018self}, but special elements in a point process which may or may not be related to a stochastic process in continuous time. Another point process is then constructed, and we show that the distribution of the random locations can be expressed in terms of the control measure of the latter point process. Finally, a conservation law appears using a function derived from the control measure.

The rest of this paper is organized as follows. In Section 2 we introduce the basic settings and definitions, with examples making connections to the existing literatures. In Section 3 we state and prove the main results, including the Noether theorem as a conservation law when the interval of interest moves along a flow, and its consequences, such as a constraint on the total variation of the density function of the random locations. Section 4 completes the paper by analyzing the behavior of the random locations at or near the boundaries of the interval of interest.

\section{Basic settings}
Here and throughout the paper, let $\mathcal I$ be the collection of all the non-degenerate compact intervals on $\mathbb R$. Let $\bar{\mathbb R}=\mathbb R\cup\{\infty\}$, and equip it with the $\sigma-$field $\bar{\mathcal B}=\sigma(\mathcal B(\mathbb R),\{\infty\})$. That is, we treat $\infty$ as a separate point and take the Borel $\sigma-$field of the extended topology.

\begin{definition}\label{def:irl}
A stochastic process $\{L(I)\}_{I\in\mathcal I}$ indexed by compact intervals and taking values in $\bar{R}$ is called an intrinsic random location, if it satisfies the following conditions:
\begin{enumerate}
\item For every $I\in \mathcal I$, $L(I)\in I\cup\{\infty\}$.
\item (Stability under restriction) For every $I_1, I_2\in\mathcal I$, $I_2\subseteq I_1$, if $L(I_1)\in I_2$, then $L(I_1)=L(I_2)$.
\item (Consistency of existence) For every $I_1, I_2\in\mathcal I$, $I_2\subseteq I_1$, if $L(I_2)\neq \infty$, then $L(I_1)\neq\infty$.
\end{enumerate}
\end{definition}

Intuitively, the value $\infty$ is used to deal with the case where a random location is not well-defined on a given interval for certain realization. For example, if the random location is defined as the first hitting time of a continuous-time stochastic process to certain level, then it is possible that the process does not hit the level in the given interval. In this case we will assign $\infty$ as the value of the random location.

Let $\vf=\{\vf^t\}_{t\in\mathbb R}$ be a flow on $\mathbb R$. That is, $\{\vf^t\}_{t\in\mathbb R}$ is a family of real-valued functions defined on $\mathbb R$, satisfying $\vf^0=Id$ and $\vf^s\circ \vf^t=\vf^{s+t}$ for $s,t\in\mathbb R$. We further assume that
\begin{equation}\label{cond.C1}\vf^t(x)=\vf(x,t)\in C^{1,1}(\mathbb R\times \mathbb R);\end{equation}
\begin{equation}\label{cond.isolated}\text{the fixed points }\Phi_0:=\{x: \vf^t(x)\equiv x\}\text{ are isolated.}\end{equation}

In many cases, it will be convenient to consider the extended real line $\mathbb R\cup\{-\infty, \infty\}$ and the set of extended fixed points $\bar{\Phi}_0=\Phi_0\cup\{-\infty, \infty\}$. Two points $\alpha, \beta, \alpha<\beta$ are called consecutive in $\bar{\Phi}_0$, if $\alpha, \beta\in \bar{\Phi}_0$, and $(\alpha,\beta)\cap\bar{\Phi}_0=\phi$. Note that since there is no fixed point between $\alpha$ and $\beta$, and $\vf$ is continuous, $\vf^t(x)$ must be monotone in $t$ for any fixed $x\in (\alpha, \beta)$ and increasing in $x$ for any fixed $t\in\mathbb R$. In particular, for every fixed $x\in(\alpha,\beta)$, $\vf^{\cdot}(x)$ is a bijection from $\mathbb R$ to $(\alpha,\beta)$.

An intrinsic random location is called $\vf$-stationary, if its distribution is compatible with the flow $\vf$, more precisely, if $\vf^t(L([a,b]))\stackrel{d}{=}L([\vf^t(a), \vf^t(b)])$ for every $t\in\mathbb R$ and $a,b\in\mathbb R, a<b$. It is called stationary if the flow is the translation $\vf^t(x)=x+t$.

\begin{remark}\label{rem:transform}
Due to the continuity of $\vf$, a $\vf$-stationary intrinsic random location, restricted to the open interval between two consecutive extended fixed points of $\vf$, can be easily transformed into a stationary intrinsic random location using a transformation. More precisely, let $L$ be a $\vf$-stationary intrinsic random location and $\alpha, \beta$ be two consecutive points in $\bar{\Phi}_0$. Fix any $x_0\in(\alpha, \beta)$. Then $\vf^{t}(x_0)$ is a continuous monotone function in $t$ with $\lim_{t\to -\infty}\vf^t(x_0)=\alpha$ and $\lim_{t\to \infty}\vf^t(x_0)=\beta$, or symmetrically, $\lim_{t\to -\infty}\vf^t(x_0)=\beta$ and $\lim_{t\to \infty}\vf^t(x_0)=\alpha$. As a result, we can define a transform $\tau: (\alpha, \beta)\to\mathbb R$ by
$$
\vf^{\tau(x)}(x_0)=x.
$$
That is, $\tau(x)$ is the time it takes to go from $x_0$ to $x$ following the flow $\vf$, or from $x$ to $x_0$ if its value is negative. Hence for any $x, y\in (\alpha, \beta)$, $\vf^{\tau(y)-\tau(x)}(x)=y$. Differentiating at $y=x$, we have
\begin{equation}\label{e:deriv}
\dot{\vf}^0(x)=\left(\frac{d\tau}{dx}\right)^{-1}(x),
\end{equation}
where $\dot{\vf}^t(x)=\frac{\partial \vf(x,t)}{\partial t}\big|_{x,t}$. Moreover, we have identity
\begin{equation}\label{e:identity}
\tau(x)=\tau((\vf^t)^{-1}(x))+t
\end{equation}
for $x\in(\alpha, \beta)$ and $t\in \mathbb R$.

Since $\tau$ is a bijection, its inverse $\tau^{-1}$ is well-defined. Define $L'$ by
$$
L'(I)=\tau(L(\tau^{-1}(I))), \quad I\in\mathcal I.
$$
It is elementary to check that if $L$ is $\vf$-stationary, then such defined $L'$ is a stationary intrinsic random location. Consequently, all the claims regarding a $\vf$-stationary intrinsic random location can be transformed into corresponding claims regarding stationary intrinsic random locations, and we only need to prove the latter ones.
\end{remark}

As explained in Introduction, the definition of intrinsic random location is motivated by the random locations of stochastic processes studied in previous literatures \cite{samorodnitsky2013intrinsic, shen2016random, shen2018self}. Therefore, it is not surprising that one important way to obtain $\vf$-stationary intrinsic random locations is through the stochastic processes exhibiting some probabilistic symmetry under $\vf$, and to define the random location as a functional which is determined by the path of the process and compatible with $\vf$. For example, let the flow be the translation $\vf^t(x)=x+t$. Correspondingly, we have the (strictly) stationary processes as the family of processes whose distributions are invariant under $\vf$. In this case, let $H$ be a space of functions closed under translation, equipped with the cylindrical $\sigma$-field, and consider a mapping $L_H: \mathcal I\times H \to \bar{\mathbb R}$ satisfying
\begin{enumerate}
\item $L_H(I, \cdot): H\to \bar{\mathbb R}$ is measurable;
\item $L_H(I, f)\in I\cup\{\infty\}$ for every $f\in H$;
\item For every $I_1, I_2\in\mathcal I$, $I_2\subseteq I_1$ and every $f\in H$, if $L_H(I_1, f)\in I_2$, then $L_H(I_2, f)=L_H(I_1, f)$;
\item For every $I_1, I_2\in\mathcal I$, $I_2\subseteq I_1$ and every $f\in H$, if $L_H(I_2, f)\neq \infty$, then $L_H(I_1, f)\neq \infty$;
\item $L_H(I, f)=L_H(I-t, f\circ\vf^t)+t$ for any $f\in H$, \text{ where } $I-t:=\{x\in\mathbb R: x+t\in I\}$.
\end{enumerate}
Conditions 2,3 and 4 correspond to the three conditions in the definition for intrinsic random locations, while Condition 5 requires the random location to be compatible with translation. Then it is easy to check that the random location $L$ defined by
$$
L(I)(\omega)=L_H(I,X(\cdot, \omega))
$$
is a stationary intrinsic random location if $\mathbf X=\{X(t,\omega)\}_{t\in\mathbb R}$ is a stationary process with sample paths in $H$. Such a mapping like $L_H$ was introduced in \cite{samorodnitsky2013intrinsic}, where its relation to stationarity has also been studied in detail. We note that this is indeed a very large family of random locations, including the location of the path supremum/infimum over an interval, the first/last hitting time to certain level, among many others.

Other probabilistic symmetries of stochastic processes which can be used to define intrinsic random locations stationary with respect to certain flow  include self-similarity, isometry (in higher dimensional domains), stationarity of the increments, \textit{etc}. They have been discussed respectively in the sequence of papers \cite{shen2018self, shen2013thesis, shen2016random}. Two cases are special and worth some more mention.

First, even for a same $\vf$, there can be various ways to construct $\vf$-stationary intrinsic random locations from stochastic processes. For instance, still consider the translation. If instead of the distribution of the process, we only require the distribution of the increments of the process to be translation invariant, then the resulting family of processes is the family of processes with stationary increments, which is strictly larger compared to the family of stationary processes. As a price for the relaxation of the condition on the side of processes, a stronger assumption needs to be imposed to the mapping $L_H$. More precisely, $L_H$ now needs to be invariant under vertical shift of the path: $L_H(I,f)=L_H(I,f+c)$ for any $f\in H$ and $c\in \mathbb R$. It has been shown in \cite{shen2016random} that similar results as in \cite{samorodnitsky2013intrinsic} hold between such random locations and stochastic processes with stationary increments.

Second, different symmetries can be combined together. For instance, due to the Lamperti transformation (see, for example, \cite{embrechts2002selfsimilar}), self-similarity by itself does not give any result which is new in nature. However, as shown in \cite{shen2018self}, when it is combined with the stationarity of the increments, stronger distributional properties can be derived for the random locations which are compatible with both scaling and translation.

It should be pointed out that although many $\vf$-intrinsic random locations are defined using certain continuous-time stochastic processes, such processes are not an indispensable part of the construction. It is in this sense that the current framework is a generalization of those used in previous works, where the definition of the random location does require a continuous-time process.

\begin{example} \label{ex:point}
Let $\{X_i\}_{i\in \mathbb Z}$ be a strictly increasing sequence of random variables such that the point process on $\mathbb R$ determined by it, $\sum_i\delta_{X_i}$, where $\delta_x(A)=\mathbf 1_{\{x\in A\}}$, is a stationary point process. Let $\{Y_i\}_{i\in \mathbb Z}$ be a discrete-time stationary process. Then one can define random locations such as
$$
L_1(I)=\sup\{X_i: X_i\in I\}
$$
and
$$
L_2(I)=\inf \{X_i: X_i\in I, Y_i=\sup_{j: X_j\in I} Y_j\},
$$
where the tradition $\inf(\phi)=\sup(\phi)=\infty$ is used. Intuitively, among all the points with the first coordinate in $I$, $L_1$ takes the largest first coordinate, while $L_2$ takes the first coordinate of the point with the largest second coordinate. The infimum in the definition of $L_2$ is to deal with the case where the supremum is achieved in multiple points. If in addition, we have $P(Y_i=Y_j)=0$ for all $i,j$, then the infimum can be removed. It is easy to check that both $L_1$ and $L_2$ are stationary intrinsic random locations.
\end{example}

The point process in example \ref{ex:point} can be regarded as a one-dimensional point process given by $\{X_i\}_{i\in \mathbb Z}$ in which each point $X_i$ also gets a label $Y_i$ in a stationary way. The following example is more ``higher dimensional'' and geometrical in nature.

\begin{example}
Consider a stationary random tessellation of $\mathbb R^2$ such as the Gilbert tessellation. For any compact intervals $I$ and $I'$, among all the pieces of the tessellation for which the geometric center is located in $I\times I'$, take the one with the largest area. Then the first or the second coordinate of its geometric center is a stationary intrinsic random location indexed by $I$ or $I'$, respectively, where we again follow the tradition to assign value $\infty$ when no piece has its center in $I\times I'$.
\end{example}

\section{Main results}
We start this section by introducing some preparatory results.

The stability under restriction property in Definition \ref{def:irl} implies the following simple yet useful comparison lemma.

\begin{lemma}\label{lem:comparison}
Let $L$ be an intrinsic random location. Then for any $I_1, I_2\in \mathcal I$ such that $I_2\subseteq I_1$ and any $I\subseteq I_2$, $P(L(I_1)\in I)\leq P(L(I_2)\in I)$.
\end{lemma}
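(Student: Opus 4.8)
The plan is to reduce the distributional inequality to a pointwise (sure) inclusion of events, after which monotonicity of $P$ finishes the job. The only structural input needed is the stability under restriction property, i.e.\ condition 2 in Definition \ref{def:irl}.

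First I would fix $I_1,I_2\in\mathcal I$ with $I_2\subseteq I_1$ and a (measurable) set $I\subseteq I_2$, and examine the event $\{L(I_1)\in I\}$. On this event we have $L(I_1)\in I\subseteq I_2$, so in particular $L(I_1)\in I_2$; applying condition 2 to the nested pair $I_2\subseteq I_1$ then forces $L(I_1)=L(I_2)$. Hence on $\{L(I_1)\in I\}$ we also have $L(I_2)=L(I_1)\in I$, which shows the inclusion of events
\begin{equation*}
\{L(I_1)\in I\}\subseteq\{L(I_2)\in I\}.
\end{equation*}
Taking probabilities yields $P(L(I_1)\in I)\le P(L(I_2)\in I)$, as claimed. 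For this last step I would briefly note that both sides are well defined: since $I$ is Borel (so $I\in\bar{\mathcal B}$) and each $L(I_j)$ is a random element of $\bar{\mathbb R}$, the two events are measurable.

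I do not anticipate any real obstacle: the lemma is essentially a verbatim restatement of condition 2 at the level of distributions, and the argument is purely set-theoretic, so no continuity or regularity of $L$ or of the interval endpoints is needed. The only point deserving a word of care is that the inclusion is used in the direction $\{L(I_1)\in I\}\subseteq\{L(I_2)\in I\}$ and not the reverse — the reverse can fail, since $L(I_2)\in I$ gives no information about where $L(I_1)$ lies — which is exactly why the inequality goes the way it does.
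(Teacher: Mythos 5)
Your proof is correct and is essentially identical to the paper's one-line argument: both use stability under restriction to derive the pathwise inclusion $\{L(I_1)\in I\}\subseteq\{L(I_2)\in I\}$ and then take probabilities.
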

\begin{proof}
By stability under restriction, $L(I_1)\in I\subseteq I_2$ implies $L(I_2)=L(I_1)\in I$, hence the result.
\end{proof}

 The distribution of a stationary intrinsic random location $L=L(I)$ is absolutely continuous in the interior of the interval $I$. Indeed, the next proposition does not only show the absolute continuity, but also provides an upper bound for the density. It was first proved in \cite{samorodnitsky2013intrinsic} for the stationary processes and random locations which are compatible with translation. Here we include a short proof of a modified version for the sake of completeness.
\begin{proposition}\label{prop:bound}
Let $L$ be a stationary intrinsic random location. For any $a<x<b$ and $0<\epsilon<\min\left\{x-a,b-x\right\}$,
\begin{align}\label{abso}
P(L([a,b])\in (x,x+\epsilon])\leq 2\epsilon\max\{\frac{1}{x-a},\frac{1}{b-x}\}.
\end{align}
\end{proposition}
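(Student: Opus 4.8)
The plan is to play the exact distributional invariance under translation (stationarity) against the one-sided inequality furnished by the comparison lemma, Lemma~\ref{lem:comparison}. By itself, translation is useless for bounding a density, since a shifted interval is never nested inside the original; and Lemma~\ref{lem:comparison} by itself says nothing about shifts. The idea is to use them together: fix a small ``core'' interval that is contained simultaneously in many translates of $[a,b]$, pack disjoint length-$\epsilon$ sets into this core, push the probability of each such set from the appropriate translate of $[a,b]$ onto the core using Lemma~\ref{lem:comparison}, and add up.

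In detail, I would write $p:=P(L([a,b])\in(x,x+\epsilon])$ and set $m:=\big\lfloor \min\{x-a,\,b-x\}/\epsilon\big\rfloor$; the hypothesis $\epsilon<\min\{x-a,b-x\}$ forces $m\ge 1$ and $m\epsilon\le\min\{x-a,b-x\}$. Put $B:=[\,a+(m-1)\epsilon,\ b\,]$ (a genuine non-degenerate interval, since $(m-1)\epsilon<m\epsilon\le b-x<b-a$), and for $k=0,1,\dots,m-1$ put $B_k:=[\,a+k\epsilon,\ b+k\epsilon\,]$ and $I^{(k)}:=(\,x+k\epsilon,\ x+(k+1)\epsilon\,]$. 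A direct check of the endpoints shows the $I^{(k)}$ are pairwise disjoint, $I^{(k)}\subseteq B$ for each $k$, and $B\subseteq B_k$ for each $k$.

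Then, for each $k$, stationarity (translation by $k\epsilon$) gives $P(L(B_k)\in I^{(k)})=p$, while Lemma~\ref{lem:comparison} applied to $I^{(k)}\subseteq B\subseteq B_k$ gives $P(L(B)\in I^{(k)})\ge P(L(B_k)\in I^{(k)})=p$. Since the $I^{(k)}$ are disjoint subsets of $B$,
\[
mp\ \le\ \sum_{k=0}^{m-1}P\big(L(B)\in I^{(k)}\big)\ =\ P\Big(L(B)\in\bigcup_{k=0}^{m-1}I^{(k)}\Big)\ \le\ 1 ,
\]
so $p\le 1/m$. It remains to note that $m=\big\lfloor \min\{x-a,b-x\}/\epsilon\big\rfloor\ge \min\{x-a,b-x\}/(2\epsilon)$, because $\lfloor y\rfloor\ge y/2$ for every $y>1$ and here $y=\min\{x-a,b-x\}/\epsilon>1$; hence $p\le 2\epsilon/\min\{x-a,b-x\}=2\epsilon\max\{1/(x-a),\,1/(b-x)\}$, which is the claim.

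The one step that is not mechanical is the first: recognizing that one should \emph{not} try to compare $[a,b]$ with its own translates, but instead introduce the auxiliary core $B$ and route the mass of each shifted target set through the larger interval $B_k\supseteq B$. After the right configuration of intervals is chosen, everything else is elementary bookkeeping. The constant $2$ enters only when passing from $\lfloor\,\cdot\,\rfloor$ to half of its argument; a more careful packing would presumably sharpen it, but $2$ is all the stated bound requires.
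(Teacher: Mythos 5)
Your proof is correct and is essentially the paper's argument in mirror image: both pack $\lfloor\min\{x-a,b-x\}/\epsilon\rfloor$ disjoint translates of $(x,x+\epsilon]$ into a fixed ``core'' subinterval of $[a,b]$, use stationarity plus Lemma~\ref{lem:comparison} to push each translate's probability onto that core, sum, and finish with $\lfloor y\rfloor\ge y/2$ for $y>1$. The paper shifts leftward onto the core $[a,x]$ after a WLOG $x-a\le b-x$ and runs it as a contradiction, while you shift rightward onto $B=[a+(m-1)\epsilon,b]$ directly and so avoid the case split, but the mechanism is the same.
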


\begin{proof}
Suppose that, to the contrary, (\ref{abso}) fails for some $a, b, x$ and $\epsilon$. That is,
\begin{align*}
P(L([a,b])\in (x,x+\epsilon])>2\epsilon\max\{\frac{1}{x-a},\frac{1}{b-x}\}.
\end{align*}
Without loss of generality, assume $x-a\leq b-x$. Then
\begin{align*}
P\left(L([a,x])\in (x-y_i,x+\epsilon-y_i]\right)&=P\left(L([a+y_i,x+y_i])\in (x,x+\epsilon]\right)\\
&\geq P(L([a,b])\in (x,x+\epsilon])\\
& >2\epsilon\max\{\frac{1}{x-a},\frac{1}{b-x}\}\\
& =\frac{2\epsilon}{x-a}
\end{align*}
for $y_i=i\epsilon$, $i=1,\dots,\lfloor \frac{x-a}{\epsilon}\rfloor$.
Since $\frac{x-a}{\epsilon}\geq 1$, $\lfloor \frac{x-a}{\epsilon}\rfloor\geq \frac{x-a}{2\epsilon}$. Hence we have
\begin{align*}
1&\geq\sum^{\lfloor \frac{x-a}{\epsilon}\rfloor}_{i=1}P\left(L([a,x]) \in (x-y_i,x+\epsilon-y_i]\right)\\
&> \lfloor \frac{x-a}{\epsilon}\rfloor \frac{2\epsilon}{x-a}\geq 1.
\end{align*}
Contradiction. A similar contradiction can be derived for the case where $x-a> b-x$. Hence (\ref{abso}) is proved.
\end{proof}

As a consequence of Proposition \ref{prop:bound}, we also have the following continuity result.

\begin{lemma}\label{lem:continuity}
Let $L$ be a stationary intrinsic random location. Then for any $u,v\in\mathbb R, u<v$, $P(L([a,b])\in[u,v])$ is continuous in $a$ and $b$ for $a<u$ and $b>v$.
\end{lemma}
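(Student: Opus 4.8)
The plan is to combine the monotonicity furnished by Lemma~\ref{lem:comparison} with a reduction of the possible jumps, via stability under restriction, to a small number of ``degenerate'' events, which are then shown to be null using Proposition~\ref{prop:bound} together with stationarity. By symmetry it suffices to treat continuity in $a$ (for fixed $b>v$), writing $g(a):=P(L([a,b])\in[u,v])$.

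\emph{Step 1: monotonicity.} For $a_1<a_2<u$ one has $[a_2,b]\subseteq[a_1,b]$ and $[u,v]\subseteq[a_2,b]$, so Lemma~\ref{lem:comparison} gives $g(a_1)\le g(a_2)$; likewise $P(L([a,b])\in[u,v])$ is non-increasing in $b$ on $(v,\infty)$. Thus $g$ is monotone and has one-sided limits everywhere, and continuity at each $a_0<u$ reduces to the statement that $g$ has no jump at $a_0$.

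\emph{Step 2: the jumps, via stability.} Fix $a_0<u$. For $a<a_0$, on $\{L([a_0,b])\in[u,v]\}$ stability under restriction forces $L([a,b])=L([a_0,b])$ whenever $L([a,b])\in[a_0,b]$, and consistency of existence excludes $L([a,b])=\infty$; subtracting,
\[
g(a_0)-g(a)=P\big(L([a_0,b])\in[u,v],\ L([a,b])\in[a,a_0)\big).
\]
Letting $a=a_0-1/n\uparrow a_0$ (and taking the limit along $\bigcup_n\{L([a_0-1/n,b])\in[u,v]\}$), the left jump of $g$ at $a_0$ equals $P(F)$, where $F:=\{L([a_0,b])\in[u,v]\}\cap\bigcap_n\{L([a_0-1/n,b])\in[a_0-1/n,a_0)\}$. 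A symmetric computation shows the right jump equals $P(E)$, where $E:=\{L([a_0,b])=a_0\}\cap\bigcap_n\{L([a_0+1/n,b])\in[u,v]\}$. Moreover a further use of stability, together with $a_0<u$, shows that on $E$ the values $L([a_0+1/n,b])$ are eventually equal to a single $\Lambda\in[u,v]$, while on $F$ one has $a_0-1/n\le L([a_0-1/n,b])<a_0$, so $L([a_0-1/n,b])\to a_0$ from below. It remains to prove $P(E)=P(F)=0$.

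\emph{Step 3 (the crux): the degenerate events are null.} This is where the symmetry is genuinely needed, and I expect it to be the main obstacle. The difficulty is that Proposition~\ref{prop:bound} bounds the density of $L([a,b])$ only on the \emph{open} interval $(a,b)$, whereas $E$ and $F$ concern the behaviour of $L$ at the moving endpoint, where $L([a,b])$ may actually carry an atom (for instance $L([a,b])\equiv a$ defines a stationary intrinsic random location); so one cannot simply bound $P(L([a_0,b])\in[a_0,a))$ directly. The plan is instead to preclude the specific configuration described in Step~2 by translating it: applying $\vf^t=(\cdot)+t$ for $t$ in a small interval $[0,\eta]$ produces copies of $E$ on which $L([a_0+t,b+t])$ equals the \emph{distinct} point $a_0+t$ while $L$ of the slightly shrunken interval lies in $[u+t,v+t]$; feeding all these intervals—and a slightly enlarged interval on which the relevant thin strip near the endpoint becomes interior—into the stability/consistency relations forces $L$ of one fixed interval to realise many distinct values on a set of total probability $\ge c\eta$ by stationarity, whereas Proposition~\ref{prop:bound} bounds the portion of that mass that can ``leak'' toward the ends of the fixed interval; choosing $\eta$ and the enlargement appropriately forces $P(E)=0$, and the same argument applied to $F$ gives $P(F)=0$. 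The delicate points will be that stationarity here is only a distributional statement, so the ``translated configurations are essentially disjoint'' heuristic must be replaced by an inequality between the appropriate finite-dimensional distributions of $L$, and that the estimate has to be organised so that the contributions near the two ends of the fixed interval—exactly where the bound of Proposition~\ref{prop:bound} degenerates—do not swamp it.
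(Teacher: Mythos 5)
Your Steps 1 and 2 are correct and carefully argued: monotonicity via Lemma~\ref{lem:comparison} reduces continuity to the vanishing of one-sided jumps, and the stability/consistency axioms correctly identify the left and right jumps as $P(F)$ and $P(E)$ (the nesting of the events $A_n$ that you need to make ``the jump $=P(F)$'' rigorous does hold, by the same stability argument you used to get the decomposition). You have also correctly isolated the real content: Step~3 is the crux, and the obstruction is exactly that Proposition~\ref{prop:bound} only controls the density in the interior, while $E$ and $F$ live at the moving endpoint where an atom is in principle possible.

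However, Step~3 as written is not a proof; it is an acknowledged plan with the delicate points (disjointness of translated configurations, and the degeneration of the density bound near both endpoints) explicitly left unresolved. In the paper this is handled by an argument that sidesteps the limiting events $E,F$ altogether and works directly with the finite increment. Writing everything in terms of the partition $[a\pm\ve,b]=[a\pm\ve,u)\cup[u,v]\cup(v,b]\cup\{\infty\}$, Lemma~\ref{lem:comparison} and consistency of existence give
\[
0\le g(a+\ve)-g(a-\ve)\le P\bigl(L([a-\ve,b])\in[a-\ve,u)\bigr)-P\bigl(L([a+\ve,b])\in[a+\ve,u)\bigr),
\]
and then \emph{one} application of stationarity (translate the second term by $-2\ve$) followed by Lemma~\ref{lem:comparison} again yields
\[
g(a+\ve)-g(a-\ve)\le P\bigl(L([a-\ve,b])\in[u-2\ve,u)\bigr)\le P\bigl(L([a,b])\in[u-2\ve,u)\bigr),
\]
which tends to $0$ by Proposition~\ref{prop:bound} since $[u-2\ve,u)$ sits strictly inside $(a,b)$. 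This is precisely the ``shift the leaked mass into the interior and apply the density bound'' mechanism you anticipated, but it is realised by a single comparison-and-translation inequality rather than by a family of translated copies of $E$; in particular it avoids having to quantify how the translated configurations interact and never has to control mass near the opposite endpoint $b$, which is where I would expect your sketch to run into trouble. You would do well to replace Step~3 by this direct estimate; it also lets you drop the discussion of $\Lambda$ and of the eventual constancy of $L([a_0+1/n,b])$, which is not needed.
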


\begin{proof}
By symmetry, it suffices to prove that $P(L([a,b])\in[u,v])$ is continuous in $a$ for $a<u$. For $\varepsilon\in\left(0,\frac{u-a}{2}\right)$, we have
\begin{align*}
0\leq & P(L([a+\varepsilon, b])\in[u,v])-P(L([a-\varepsilon, b])\in[u,v])\\
= & (P(L([a-\varepsilon, b])\in[a-\varepsilon, u))-P(L([a+\varepsilon, b])\in[a+\varepsilon, u)))\\
& -(P(L([a+\varepsilon, b])\in(v,b])-P(L([a-\varepsilon, b])\in(v,b]))\\
& -(P(L([a+\varepsilon, b])=\infty)-P(L([a-\varepsilon, b])=\infty))\\
\leq & P(L([a-\varepsilon, b])\in[a-\varepsilon, u))-P(L([a+\varepsilon, b])\in[a+\varepsilon, u)),
\end{align*}
where the inequalities come from Lemma \ref{lem:comparison}. Also, by stationarity and Lemma \ref{lem:comparison},
\begin{multline*}
P(L([a+\varepsilon, b])\in[a+\varepsilon, u))=P(L([a-\varepsilon, b-2\varepsilon])\in[a-\varepsilon, u-2\varepsilon))\\
\geq P(L([a-\varepsilon, b])\in[a-\varepsilon, u-2\varepsilon)).
\end{multline*}
Hence
\begin{align}
& P(L([a+\varepsilon, b])\in[u,v])-P(L([a-\varepsilon, b])\in[u,v])\nonumber\\
\leq & P(L([a-\varepsilon, b])\in[a-\varepsilon, u))-P(L([a-\varepsilon, b])\in[a-\varepsilon, u-2\varepsilon))\nonumber\\
= & P(L([a-\varepsilon, b])\in[u-2\varepsilon, u)).\label{e:boundbyf}
\end{align}
By Proposition \ref{prop:bound}, $P(L([a-\varepsilon, b])\in[u-2\varepsilon, u))\leq P(L([a, b])\in[u-2\varepsilon, u)) \to 0$ as $\varepsilon\to 0$. Thus we conclude that $P(L([a,b])\in[u,v])$ is continuous in $a$ for $a<u$.
\end{proof}

In order to introduce a point process which will play an essential role in deriving the main results, we first show that each intrinsic random location gives a partial order among the potential values of the random location. Similar idea originated in \cite{shen2016random}. The proof is however different due to the difference in settings. More precisely, let $L$ be an intrinsic random location. Define the random set $S:=\{x\in\mathbb R: x=L(I) \text{ for some }I\in\mathcal I\}$. Define a binary relation ``$\preceq$'' on $S$:
$$
x\preceq y \quad \text{ if there exists }I\in\mathcal I, \text{ such that }x,y\in I, L(I)=y.
$$
Intuitively, $x\preceq y$ if both points are in a same interval, and the location falls on $y$ rather than on $x$.

\begin{lemma}\label{lem:partialorder}
$\preceq$ is a partial order.
\end{lemma}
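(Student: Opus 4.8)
The plan is to verify, for each fixed realization of $L$, the three defining properties of a partial order on $S$ --- reflexivity, antisymmetry and transitivity --- using only the deterministic axioms of Definition \ref{def:irl}. Since those axioms hold in every realization, this gives the (pathwise, hence almost sure) conclusion. Reflexivity is immediate: if $x\in S$ then $x=L(I)$ for some $I\in\mathcal I$, and since $x\in\mathbb R$ we have $x\in I$; taking this same $I$ with ``$y=x$'' in the definition of $\preceq$ gives $x\preceq x$.

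For antisymmetry I would suppose $x\preceq y$ and $y\preceq x$ with $x\neq y$ and derive a contradiction. Pick $I_1\in\mathcal I$ witnessing $x\preceq y$ (so $x,y\in I_1$ and $L(I_1)=y$) and $I_2\in\mathcal I$ witnessing $y\preceq x$ (so $x,y\in I_2$ and $L(I_2)=x$). The intersection $I_1\cap I_2$ still contains both $x$ and $y$, hence is a non-degenerate compact interval; since $L(I_1)=y\in I_1\cap I_2\subseteq I_1$, stability under restriction gives $L(I_1\cap I_2)=L(I_1)=y$, and symmetrically $L(I_1\cap I_2)=L(I_2)=x$, forcing $x=y$, a contradiction.

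For transitivity, given $I_1$ witnessing $x\preceq y$ and $I_2$ witnessing $y\preceq z$, I would work with $I:=I_1\cup I_2$. Because $I_1$ and $I_2$ share the point $y$, $I$ is again a non-degenerate compact interval, and it contains both $x$ and $z$. First rule out $L(I)=\infty$: since $I_2\subseteq I$ and $L(I_2)=z\neq\infty$, consistency of existence gives $L(I)\neq\infty$, so $L(I)\in I_1\cup I_2$. Then do a case analysis: if $L(I)\in I_2$, stability under restriction gives $L(I)=L(I_2)=z$; and the alternative $L(I)\in I_1\setminus I_2$ is impossible, because stability under restriction would then give $L(I)=L(I_1)=y$, which lies in $I_2$, contradicting $L(I)\notin I_2$. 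Hence $L(I)=z$ with $x,z\in I$, i.e. $x\preceq z$.

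The step I expect to be the main obstacle --- modest as it is --- is the transitivity argument. One has to notice that the two witnessing intervals necessarily overlap (at $y$), so that their union stays inside $\mathcal I$; and one has to dispose of the value $\infty$ via consistency of existence before the stability-under-restriction axiom can be applied in the case split. Everything else is routine bookkeeping with the axioms.
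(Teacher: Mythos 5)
Your proof is correct and follows essentially the same strategy as the paper's: use stability under restriction to collapse both witnessing intervals to a common interval for antisymmetry, and for transitivity take the union of the two witnessing intervals (which overlap at $y$), invoke consistency of existence to rule out $\infty$, and argue by cases which subinterval captures $L$. The only cosmetic difference is that the paper passes immediately to the minimal intervals $[x\wedge y, x\vee y]$ and $[y\wedge z, y\vee z]$ (noting that $x\preceq y$ iff $L([x\wedge y,x\vee y])=y$) rather than working with the original witnesses $I_1, I_2$ --- and you are if anything slightly more explicit than the paper in spelling out why $L(I_1\cup I_2)\neq\infty$.
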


\begin{proof}
It is easy to see that $\preceq$ is reflexive. It is antisymmetric since for any $I$ containing $x$ and $y$ and satisfying $L(I)=x$ or $L(I)=y$, $L(I)=L([x\wedge y,x\vee y])$ by the stability under restriction property in Definition \ref{def:irl}. As a result, $x\preceq y$ if and only if $L([x\wedge y, x\vee y])=y$. Finally, if $x\preceq y$ and $y\preceq z$, then by Definition \ref{def:irl},
$$
L([x\wedge y, x\vee y]\cup[y\wedge z, y\vee z])\in\{L([x\wedge y, x\vee y]), L([y\wedge z, y\vee z])\}=\{y,z\}\subset [y\wedge z, y\vee z]
$$
Again by the stability under restriction property, we must have $L([x\wedge y, x\vee y]\cup[y\wedge z, y\vee z])=L([y\wedge z, y\vee z])=z$, hence $x\preceq z$.
\end{proof}

For each $x\in S$, define $l_x:=\sup\{y\in S: y<x, x\preceq y\}$ and $r_x:=\inf\{y\in S: y>x, x\preceq y\}$. Intuitively, $l_x$ and $r_x$ are the farthest locations to the left and to the right of the point $x$ such that no point in $S$ between this location and $x$ has a higher order than $x$ according to $\preceq$. It is easy to see that if in addition, there exists $[a,b]\in\mathcal I$ such that $x=L([a,b])$ and $x\in (a,b)$, then $l_x\leq a<x$ and $r_x\geq b>x$. Thus, for every such $x$, the point in $\mathbb R^3$ defined by $\epsilon_x:=(l_x, x, r_x)$ falls in the area $E:=\{(z_1,z_2,z_3): z_1<z_2<z_3\}$. Let $\mathcal E$ be the collection of such points:
 $$
 \mathcal E=\{\epsilon_x=(l_x, x, r_x): x\in S, l_x<x<r_x\},
 $$
 then the (random) counting measure determined by $\mathcal E$, denoted by $\xi:=\sum_{x\in\mathcal E}\delta_{\epsilon_x}$, forms a point process in $E$. Since $l_x<a, r_x>b$ and $x\in(a,b)$ implies $L([a,b])=x$, $\mathcal E$ has at most one point in $(-\infty,a)\times (a,b)\times (b,\infty)$ for any $a,b\in\mathbb R, a<b$, hence the point process $\xi$ is $\sigma-$finite. Denote by $\eta$ its control measure, \textit{i.e.}, $\eta(A)=E(\xi(A))$ for any $A\in \mathcal B(E)$, where $\mathcal B(E)$ is the Borel $\sigma-$field on $E$.

\begin{theorem}\label{thm:relation}
Let $L$ be a stationary intrinsic random location, and $\eta$ be the control measure of the point process $\xi$ defined for $L$ as above. Then for any $a<u<v<b$,
\begin{equation}\label{eq:relation}
P(L([a,b])\in [u,v])=\eta((-\infty, a)\times (u,v)\times (b,\infty))=\eta((-\infty, a]\times [u,v]\times [b,\infty)).
\end{equation}
\end{theorem}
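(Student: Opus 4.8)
The plan is to sandwich $P(L([a,b])\in[u,v])$ between the $\eta$-measures of the open box $O:=(-\infty,a)\times(u,v)\times(b,\infty)$ and the closed box $C:=(-\infty,a]\times[u,v]\times[b,\infty)$, and then to show these two numbers agree because the boundary faces of $C$ are $\eta$-null. For the sandwich, first observe two event inclusions. If $L([a,b])=x\in[u,v]$ then $x\in(a,b)$ (since $a<u\le x\le v<b$), so by the facts recorded just before the statement $\epsilon_x=(l_x,x,r_x)\in\mathcal E$ with $l_x\le a<x<b\le r_x$, hence $\epsilon_x\in C$ and $\xi(C)\ge 1$; conversely, if $\xi(O)\ge 1$ there is $\epsilon_x=(l_x,x,r_x)$ with $l_x<a$, $x\in(u,v)\subset(a,b)$, $r_x>b$, and then $L([a,b])=x\in[u,v]$. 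So $\{\xi(O)\ge 1\}\subseteq\{L([a,b])\in[u,v]\}\subseteq\{\xi(C)\ge 1\}$. Since $O\subseteq(-\infty,a)\times(a,b)\times(b,\infty)$ contains at most one point of $\xi$, we have $\xi(O)\in\{0,1\}$ a.s., whence $\eta(O)=P(\xi(O)\ge 1)$; combining with $P(\xi(C)\ge 1)\le E[\xi(C)]=\eta(C)$ gives $\eta(O)\le P(L([a,b])\in[u,v])\le\eta(C)$.

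Next I would promote the left inequality to an equality, valid for every admissible $a<u<v<b$. The faces $\{z_2=u\}$ and $\{z_2=v\}$ of $C$ carry no $\eta$-mass: a point $\epsilon_v=(l_v,v,r_v)$ on $\{z_2=v\}\cap C$ would satisfy $v\in S$, $l_v\le a$, $r_v\ge b$, and then for any $a'\in(a,v)$, $b'\in(v,b)$ we have $l_v<a'<v<b'<r_v$, so $L([a',b'])=v$, an event of probability zero by the absolute continuity in Proposition \ref{prop:bound}; hence $\eta(\{z_2=v\}\cap C)=P(v\in S,\,l_v\le a,\,r_v\ge b)=0$, and likewise on $\{z_2=u\}$. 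Therefore $\eta(C)=\eta((-\infty,a]\times(u,v)\times[b,\infty))\le\eta((-\infty,a+\delta)\times(u,v)\times(b-\delta,\infty))\le P(L([a+\delta,b-\delta])\in[u,v])$ for all small $\delta>0$, the last bound being $\eta(O)\le P(L\in[u,v])$ applied with $(a,b)$ replaced by $(a+\delta,b-\delta)$. Letting $\delta\downarrow 0$ and using the continuity in Lemma \ref{lem:continuity} (made joint via the monotonicity of Lemma \ref{lem:comparison} and a squeeze) yields $\eta(C)\le P(L([a,b])\in[u,v])$, hence $\eta(C)=P(L([a,b])\in[u,v])$ for all $a<u<v<b$.

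It remains to see $\eta(C)=\eta(O)$, i.e. $\eta(C\setminus O)=0$. Writing $C=C(a,u,v,b)$, one has $C\setminus O=(\{z_1=a\}\cup\{z_2=u\}\cup\{z_2=v\}\cup\{z_3=b\})\cap C$; the two middle faces are null by the previous step. For $\{z_1=a\}\cap C$, the sets $C(a,u,v,b)\setminus C(a-\delta,u,v,b)=(a-\delta,a]\times[u,v]\times[b,\infty)$ decrease to it, so by continuity from above of the finite measure $\eta$ its $\eta$-mass equals $\lim_{\delta\downarrow 0}(\eta(C(a,u,v,b))-\eta(C(a-\delta,u,v,b)))=\lim_{\delta\downarrow 0}(P(L([a,b])\in[u,v])-P(L([a-\delta,b])\in[u,v]))=0$ by Lemma \ref{lem:continuity}; the face $\{z_3=b\}\cap C$ is treated identically by perturbing $b$. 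Hence $\eta(C\setminus O)=0$, and combining all the steps, $P(L([a,b])\in[u,v])=\eta(C)=\eta(O)$, as claimed.

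The main obstacle is precisely this passage from the soft sandwich $\eta(O)\le P(L([a,b])\in[u,v])\le\eta(C)$ to an exact identity: one must show the topological boundary of the box is $\eta$-negligible, which for the two middle faces relies on the absolute continuity of $L([a',b'])$ on the open interval, and for the first and third faces relies on the continuity of the endpoint distribution function — and the latter argument only becomes usable once $\eta(C)=P(L([a,b])\in[u,v])$ has already been established for a full range of parameters, so that telescoping differences in the endpoints can be formed.
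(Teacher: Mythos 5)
Your proof is correct, and it reaches the same conclusion through a genuinely different route than the paper's. The paper establishes the result in a single pass: it sandwiches $P(L([a,b])\in[u,v])$ between $\eta\bigl((-\infty,a)\times[u,v]\times(b,\infty)\bigr)$ and $\eta\bigl((-\infty,a]\times[u,v]\times[b,\infty)\bigr)$, and flanks that sandwich on both ends by $P\bigl(L([a\mp\ve,b\pm\ve])\in[u,v]\bigr)$, so that one application of Lemma~\ref{lem:continuity} collapses the whole five-term chain at once; the passage from $[u,v]$ to $(u,v)$ in the middle coordinate is then a final one-line appeal to Proposition~\ref{prop:bound}. You instead start from the open box $O$ and closed box $C$, and proceed in stages: first kill the middle faces $\{z_2=u\}$, $\{z_2=v\}$ via Proposition~\ref{prop:bound} (same ingredient, deployed earlier), then establish $\eta(C)=P(L([a,b])\in[u,v])$ via a one-sided squeeze $\eta(C)\le P(L([a+\delta,b-\delta])\in[u,v])$ combined with Lemmas~\ref{lem:comparison} and~\ref{lem:continuity}, and finally treat the outer faces $\{z_1=a\}$, $\{z_3=b\}$ by continuity from above of the finite measure $\eta$, using the identity $\eta(C(a',u,v,b'))=P(L([a',b'])\in[u,v])$ just proved for a neighbourhood of parameters. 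That last step is the most distinctive difference: the paper never needs to invoke continuity of $\eta$ as a measure, since the outer faces are handled implicitly by the two-sided flanking. Your version is a bit longer and somewhat self-referential (the identity for nearby parameters is used to analyze the boundary faces of the box at the original parameters), but it is logically sound, and it has the pedagogical merit of isolating exactly which boundary faces of the box need which hypothesis. One small point worth making explicit in a final write-up is the monotone squeeze you allude to for letting $\delta\downarrow 0$ in $P(L([a+\delta,b-\delta])\in[u,v])$: Lemma~\ref{lem:comparison} gives $P(L([a,b])\in[u,v])\le P(L([a+\delta,b-\delta])\in[u,v])\le P(L([a+\delta,b])\in[u,v])$, and the right side tends to $P(L([a,b])\in[u,v])$ by Lemma~\ref{lem:continuity}; this is the same joint-continuity issue the paper's argument also implicitly relies on.
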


\begin{remark}
Theorem \ref{thm:relation} serves for three purposes. First, it builds a connection between the distribution of a stationary intrinsic random location and the control measure of the point process related to it. Second, it also shows that the planes in $E$ with one of the three coordinates fixed are always null sets under $\eta$. As a result, one does not need to pay special attention to the openess/closedness of the boundaries of the intervals for the coordinates. Finally, since $L$ is stationary, \textit{i.e.}, $P(L([a,b])\in[u,v])=P(L([a+c,b+c])\in[u+c,v+c])$ for all $a\leq u<v\leq b$ and $c\in \mathbb R$, and the sets of the form $(-\infty, a]\times [u,v]\times [b,\infty)$ generate $\mathcal B(E)$, the measure $\eta$ is invariant under translation along the direction $(1,1,1)$. We formulate this result as the following corollary, the proof of which is obvious and omitted.
\end{remark}

\begin{corollary}\label{cor:invariance}
Let $A\in\mathcal B(E)$. Then $\eta(A)=\eta(A+c)$ for any $c\in\mathbb R$, where $A+c=\{(z_1,z_2,z_3): (z_1-c, z_2-c, z_3-c)\in A\}$.
\end{corollary}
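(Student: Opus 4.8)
The plan is to read the identity off Theorem~\ref{thm:relation} on a convenient generating class of sets and then propagate it to all Borel sets by a uniqueness-of-measures argument. Fix $c\in\mathbb{R}$ once and for all. Since the translation $z\mapsto z+(c,c,c)$ is a homeomorphism of $E$ onto itself, $A\mapsto A+c$ is a bijection of $\mathcal{B}(E)$ onto itself, and $\eta_c(\cdot):=\eta(\,\cdot\,+c)$ is again a $\sigma$-finite measure on $\mathcal{B}(E)$ (it is the image of $\eta$ under this homeomorphism). The goal is thus to show $\eta_c=\eta$.

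For the generating class I would take the corner boxes $B_{a,u,v,b}:=(-\infty,a]\times[u,v]\times[b,\infty)$ with $a<u<v<b$. For such parameters one also has $a+c<u+c<v+c<b+c$, so Theorem~\ref{thm:relation} applies twice and gives
\begin{align*}
\eta\big(B_{a,u,v,b}\big)&=P\big(L([a,b])\in[u,v]\big),\\
\eta\big(B_{a+c,u+c,v+c,b+c}\big)&=P\big(L([a+c,b+c])\in[u+c,v+c]\big).
\end{align*}
By stationarity of $L$ the two right-hand sides are equal, and $B_{a+c,u+c,v+c,b+c}$ is exactly the translate $B_{a,u,v,b}+c$ in the notation of the statement. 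Hence $\eta(A)=\eta_c(A)$ for every corner box $A$, and trivially for $A=\emptyset$.

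It remains to upgrade this equality to arbitrary $A\in\mathcal{B}(E)$. The corner boxes together with $\emptyset$ form a $\pi$-system, since the intersection of two of them is again a corner box or empty (possibly with a degenerate middle factor, which is $\eta$-null by the remark following Theorem~\ref{thm:relation}, so the identity extends to that case as well). As observed in that remark, this $\pi$-system generates $\mathcal{B}(E)$: taking differences of corner boxes with a common middle interval produces all bounded rectangular boxes contained in $E$. Moreover $E$ is covered by countably many corner boxes with rational parameters, each of $\eta$-measure at most $1$ by Theorem~\ref{thm:relation}. Two $\sigma$-finite measures that agree on such a generating $\pi$-system admitting a countable cover by finite-measure members of the system must coincide: one applies Dynkin's $\pi$-$\lambda$ theorem to the trace $\sigma$-algebra of each covering box, and then glues the pieces together via inclusion-exclusion (all the relevant finite intersections again lying in the $\pi$-system). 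Applying this to $\eta$ and $\eta_c$ yields $\eta=\eta_c$ on $\mathcal{B}(E)$, and since $c\in\mathbb{R}$ was arbitrary, the corollary follows.

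The content of the statement is essentially immediate from Theorem~\ref{thm:relation} and the definition of stationarity; the only point that requires a little care is the last step, where the possible infiniteness of $\eta$ on $E$ rules out a direct appeal to the $\pi$-$\lambda$ theorem and forces the localization to a countable family of finite-measure corner boxes.
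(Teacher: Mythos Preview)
Your argument is correct and is precisely the elaboration of what the paper has in mind: the paper states the corollary immediately after the remark that stationarity together with the fact that the corner boxes $(-\infty,a]\times[u,v]\times[b,\infty)$ generate $\mathcal{B}(E)$ yields translation invariance of $\eta$, and then explicitly omits the proof as obvious. Your write-up supplies the details the paper leaves implicit, in particular the care needed because the corner boxes do not increase to $E$ so that a direct $\pi$--$\lambda$ argument must be localized to a countable finite-measure cover; this is the one nontrivial bookkeeping point, and you handle it correctly.
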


\begin{proof}[Proof of Theorem \ref{thm:relation}]
If $x=L([a,b])\in[u,v]$, then $x\in S$, $l_x\leq a$, and $r_x\geq b$. Note that it is possible that $l_x=a$ (resp. $r_x=b$), since $a$ (resp. $b$) can be the limit of an increasing (resp. decreasing) sequence of points in $S$ with higher orders than $x$ according to $\preceq$, while the endpoint itself is not in $S$ or does not have a higher order than $x$. Meanwhile, if there exists $x\in[u,v]\cap S$ such that $l_x<a$ and $r_x>b$, then we must have $x=L([a,b])$. Therefore,
\begin{align*}
& P(L([a-\ve, b+\ve])\in[u,v]) \leq \eta((-\infty, a)\times [u,v]\times (b, \infty))\leq P(L([a,b])\in[u,v])\\
\leq &  \eta((-\infty, a]\times [u,v]\times [b, \infty)) \leq P(L([a+\ve,b-\ve])\in[u,v]).
\end{align*}
The control measure $\eta$ appears in the above expression because there can be at most one point in $\mathcal E$ in the area $(-\infty, a]\times [u,v]\times [b, \infty)$. In this case the expectation coincides with the corresponding probability.

By Lemma \ref{lem:continuity},
$$
\lim_{\ve\downarrow 0}P(L([a-\ve, b+\ve])\in[u,v])=\lim_{\ve\downarrow 0}P(L([a+\ve, b-\ve])\in[u,v]),
$$
hence we must have
\begin{equation}\label{e:relation}
\eta((-\infty, a)\times[u,v]\times (b, \infty))=P(L([a,b])\in[u,v])= \eta((-\infty, a]\times [u,v]\times [b, \infty)).
\end{equation}

Finally, by Proposition \ref{prop:bound}, $L([a,b])$ is continuously distributed on $(a,b)$, hence $P(L([a,b])\in[u,v])$ is continuous in $u$ and $v$, so is $\eta((-\infty, a)\times [u,v]\times (b, \infty))$. Therefore, $\eta((-\infty, a)\times [u,v]\times (b, \infty))=\eta((-\infty, a)\times(u,v)\times (b, \infty))$.
\end{proof}

For a stationary intrinsic random location $L$, $a<u<v<b$ and any $\ve>0$, define
$$
M_{\ve, [u,v]}=P(L([a,b])\in [a,a+\ve), L([a+\ve, b+\ve])\in [u,v])
$$
and
$$
N_{\ve, [u,v]}=P(L([a+\ve,b+\ve])\in (b,b+\ve], L([a,b])\in [u,v]).
$$
Further define $\mu_{\ve, [u,v]}$ to be the conditional distribution of $L([a+\ve, b+\ve])$ given $L([a,b])\in [a,a+\ve)$ and $L([a+\ve, b+\ve])\in [u,v]$, and $\nu_{\ve,[u,v]}$ to be the conditional distribution of $L([a, b])$ given $L([a+\ve,b+\ve])\in (b,b+\ve]$ and $L([a, b])\in [u,v]$, if $M_{\ve, [u,v]}$ and $N_{\ve, [u,v]}$ are strictly positive. If $M_{\ve, [u,v]}=0$ or $N_{\ve,[u,v]}=0$, define the corresponding $\mu_{\ve,[u,v]}$ or $\nu_{\ve,[u,v]}$ to be the null measure.

Let $\mu^{(a,b)}$ and $\nu^{(a,b)}$ be measures on $(a,b)$ (equipped with the Borel $\sigma-$field) given by
\begin{equation}\label{e:mu}
\mu^{(a,b)}([w,y))=\eta \left(\{(z_1,z_2,z_3): z_1\in[a,a+1), z_2\in[z_1+w-a, z_1+y-a), z_3\in (z_1+b-a, \infty)\}\right)
\end{equation}
and
\begin{equation}\label{e:nu}
\nu^{(a,b)}([w,y))=\eta \left(\{(z_1,z_2,z_3): z_1\in(-\infty, z_3+a-b), z_2\in[z_3+w-b, z_3+y-b), z_3\in (b, b+1]\}\right)
\end{equation}
for all $w,y\in(a,b)$, $w<y$, where $\eta$ is the control measure of the point process $\xi$ corresponding to $L$ as defined previously. Denote by $\mu^{(a,b)}|_{[u,v]}$ and $\nu^{(a,b)}|_{[u,v]}$ the restriction of the measures $\mu^{(a,b)}$ and $\nu^{(a,b)}$ on $[u,v]$, respectively.

Our last preparation before proceeding to the proof of the main result is the following proposition.

\begin{proposition}\label{prop:limit}
Let $L$ be a stationary intrinsic random location. For $a<u<v<b$, let $M_{\ve, [u,v]}$, $N_{\ve, [u,v]}$, $\mu_{\ve, [u,v]}$ and $\nu_{\ve, [u,v]}$ be defined as above. Then $\frac{1}{\ve}M_{\ve, [u,v]}\mu_{\ve, [u,v]}$ and $\frac{1}{\ve}N_{\ve, [u,v]}\nu_{\ve, [u,v]}$ converge vaguely as $\ve\to 0$ to $\mu^{(a,b)}|_{[u,v]}$ and $\nu^{(a,b)}|_{[u,v]}$, respectively.
\end{proposition}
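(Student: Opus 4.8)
The plan is to unwind the definitions of $M_{\ve,[u,v]}$, $\mu_{\ve,[u,v]}$, $N_{\ve,[u,v]}$ and $\nu_{\ve,[u,v]}$ in terms of the point process $\xi$, and then identify the limiting quantities with $\mu^{(a,b)}|_{[u,v]}$ and $\nu^{(a,b)}|_{[u,v]}$ using Theorem~\ref{thm:relation}. I will treat only the $M$-side; the $N$-side is symmetric (reflecting the roles of the left and right endpoints and using \eqref{e:nu} in place of \eqref{e:mu}).

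First I would observe that for a fixed interval $[w,y)\subseteq[u,v]$, the product measure evaluated on $[w,y)$ satisfies
$$
\tfrac{1}{\ve}M_{\ve,[u,v]}\,\mu_{\ve,[u,v]}([w,y))=\tfrac{1}{\ve}\,P\big(L([a,b])\in[a,a+\ve),\ L([a+\ve,b+\ve])\in[w,y)\big),
$$
directly from the definition of the conditional distribution $\mu_{\ve,[u,v]}$ (and this identity holds trivially when $M_{\ve,[u,v]}=0$). The key step is then to express the event on the right in terms of $\xi$. Using the characterization from the proof of Theorem~\ref{thm:relation} --- namely $x=L([a+\ve,b+\ve])\in[w,y)$ forces $\ve_x=(l_x,x,r_x)$ to lie in $(-\infty,a+\ve]\times[w,y]\times[b+\ve,\infty)$, while the extra event $L([a,b])\in[a,a+\ve)$ pins the left coordinate $l_x$ into the window $(-\infty,a+\ve)$ but \emph{not} below $a$ (if $l_x<a$ then $L([a,b])=x$, and $x\notin[a,a+\ve)$ since $a+\ve<u\le w$, contradiction); so in fact $l_x\in[a,a+\ve)$. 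Conversely a point of $\mathcal E$ with $l_x\in[a,a+\ve)$, $x\in[w,y)$, $r_x>b+\ve$ realizes both events. Since at most one point of $\mathcal E$ can lie in such a ``slab'' region, the probability equals $\eta$ of the corresponding set, up to the usual $O(\ve)$ discrepancies at the boundaries $z_1=a+\ve$ versus $z_1=a$ and $z_3=b+\ve$ versus $z_3=b$, which vanish in the limit by the continuity/null-boundary statements in Theorem~\ref{thm:relation} and Lemma~\ref{lem:continuity}. Dividing by $\ve$ and using the translation invariance of $\eta$ along $(1,1,1)$ (Corollary~\ref{cor:invariance}) to rescale the $z_1$-window $[a,a+\ve)$ to $[a,a+1)$ --- i.e. $\eta$ of the slab of $z_1$-width $\ve$ equals $\ve$ times what one gets by ``integrating the cross-section'', which is precisely the right-hand side of \eqref{e:mu} with $w,y$ in place --- gives $\tfrac1\ve M_{\ve,[u,v]}\mu_{\ve,[u,v]}([w,y))\to\mu^{(a,b)}([w,y))=\mu^{(a,b)}|_{[u,v]}([w,y))$.

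To upgrade this convergence on half-open subintervals to vague convergence, I would note that $\mu^{(a,b)}|_{[u,v]}$ is a finite measure (bounded by $P(L([a,b])\in[u,v])\le 1$ via Theorem~\ref{thm:relation}, monotonicity, and the fact that the defining sets in \eqref{e:mu} are contained in $(-\infty,a+1]\times[u,v]\times(b,\infty)$ after the $z_1$-translation), that it is continuous (no atoms) because $P(L([a,b])\in\cdot)$ restricted to $(a,b)$ has a density by Proposition~\ref{prop:bound}, and then invoke the standard fact that convergence of measures on a generating $\pi$-system of continuity sets (here, intervals $[w,y)$ whose endpoints are continuity points, which are all of them) plus boundedness implies vague convergence. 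The main obstacle is the careful bookkeeping in the second step: making rigorous that the left coordinate $l_x$ is confined to $[a,a+\ve)$ rather than merely $(-\infty,a+\ve)$, and controlling the boundary mismatches between the ``$\ve$-inflated'' region $(-\infty,a+\ve)\times[w,y]\times(b+\ve,\infty)$ and the ``$\ve$-deflated'' one so that both sandwich the probability and have the same $\ve\to0$ limit after dividing by $\ve$ --- this is exactly the place where Theorem~\ref{thm:relation}'s two-sided identity and the continuity Lemma~\ref{lem:continuity} do the work, and where one must check the $O(\ve)$ error terms are genuinely $o(\ve)$ after the normalization, or else are absorbed into the sandwich.
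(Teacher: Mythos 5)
Your setup and reduction match the paper: by symmetry you treat the $M$-side only; you observe that $\tfrac{1}{\ve}M_{\ve,[u,v]}\mu_{\ve,[u,v]}([w,y)) = \tfrac1\ve\,P\big(L([a,b])\in[a,a+\ve),\ L([a+\ve,b+\ve])\in[w,y)\big)$; and you translate this event into the language of the triples $(l_x,x,r_x)$, arriving (after discarding boundary planes, which are $\eta$-null by Theorem~\ref{thm:relation}) at $\tfrac1\ve\,\eta\big([a,a+\ve)\times[w,y)\times(b+\ve,\infty)\big)$. A minor imprecision along the way: the forced inclusion is $l_x\in[a,a+\ve]$, not $[a,a+\ve)$, since $a+\ve$ can be the limit of a sequence of points in $S$ that dominate $x$; the paper keeps the closed endpoint and then discards the boundary plane as an $\eta$-null set.

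The genuine gap is the passage you describe as ``$\eta$ of the slab of $z_1$-width $\ve$ equals $\ve$ times what one gets by integrating the cross-section, which is precisely the right-hand side of~\eqref{e:mu}.'' That is exactly the statement that needs proof, not a consequence of Corollary~\ref{cor:invariance}. Invariance of $\eta$ under translation along $(1,1,1)$ does not by itself yield a disintegration of $\eta$ into Lebesgue measure along that direction tensored with a cross-sectional measure. Moreover, $[a,a+\ve)\times[w,y)\times(b+\ve,\infty)$ is a rectangular box, while the defining set in~\eqref{e:mu} is sheared --- the $z_2,z_3$ ranges move with $z_1$ --- so even with a disintegration in hand one must still account for the discrepancy between the box and the sheared slab. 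The paper closes both issues simultaneously by a discretization-and-sandwich argument: for $\ve=1/n$, Corollary~\ref{cor:invariance} gives
\begin{align*}
\tfrac1\ve\,\eta\big([a,a+\ve)\times[w,y)\times(b+\ve,\infty)\big)
= \sum_{i=0}^{n-1}\eta\Big(\big[a+\tfrac in,a+\tfrac{i+1}n\big)\times\big[w+\tfrac in,y+\tfrac in\big)\times\big(b+\tfrac{i+1}n,\infty\big)\Big),
\end{align*}
a ``staircase'' of translated boxes which is sandwiched between two sheared slabs of the form in~\eqref{e:mu} with the $z_2,z_3$-ranges shrunk, respectively enlarged, by $\ve$; the bounds are then extended from $\ve=1/n$ to rational $\ve$ and to all $\ve$ by continuity of $\ve\mapsto\tfrac1\ve\eta(\cdots)$, and continuity of measure produces the limit. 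You must either reproduce this sandwich or invoke (with proof or citation) a disintegration theorem for $\sigma$-finite measures invariant under a one-parameter translation group; as written, the key step is asserted rather than established. Your final paragraph on upgrading interval convergence to vague convergence is fine and is also implicit in the paper.
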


\begin{proof}
By symmetry it suffices to prove the convergence for $\frac{1}{\ve}M_{\ve, [u,v]}\mu_{\ve, [u,v]}$ as $\ve\to 0$. For any $\ve\in(0,b-a)$, define measure $\lambda_\ve$ on $[a+\ve, b)$ by
$$
\lambda_\ve(A)=P(L([a,b])\in [a,a+\ve), L([a+\ve, b+\ve])\in A), \quad A\in\mathcal B([a+\ve, b)),
$$
then it is easy to see that for any $\ve<u-a$ and $A'\in \mathcal B([u, v])$,
$$
M_{\ve, [u,v]}\mu_{\ve, [u,v]}(A')=P(L([a,b])\in [a,a+\ve), L([a+\ve, b+\ve])\in A')=\lambda_\ve(A').
$$
Hence it suffices to prove that $\frac{1}{\ve}\lambda_{\ve}([w, y))$ converges to $\mu^{(a,b)}([w,y))$ for any $w,y\in(a,b), w<y$.

Note that $L([a,b])\in [a,a+\ve)$ and $L([a+\ve, b+\ve])\in [w,y)$ implies that there exists a point $x\in [w,y)\cap S$, such that $l_x\in [a, a+\ve]$ and $r_x\in [b+\ve, \infty)$. Meanwhile, the existence of a $x\in [w,y)\cap S$ satisfying $l_x\in (a, a+\ve)$ and $r_x\in (b+\ve, \infty)$ would guarantee that $L([a,b])\in [a,a+\ve)$ and $L([a+\ve, b+\ve])\in [w,y)$. Therefore, we have
$$
\eta( (a,a+\ve)\times [w,y)\times (b+\ve, \infty))\leq  \lambda_\ve([w,y))\leq \eta([a,a+\ve]\times [w,y)\times [b+\ve, \infty)).
$$

By Theorem \ref{thm:relation}, the boundaries of the intervals are negligible under $\eta$. Hence
$$
\lambda_\ve([w,y))=\eta([a,a+\ve)\times [w,y)\times (b+\ve, \infty)).
$$
For $\ve=\frac{1}{n}$, $n\in\mathbb N$, by Corollary \ref{cor:invariance}, we have

\begin{align*}
& \frac{1}{\ve}\eta([a,a+\ve)\times [w,y)\times (b+\ve, \infty))\\
=& n\eta\left(\left[a,a+\frac{1}{n}\right)\times [w,y)\times \left(b+\frac{1}{n}, \infty\right)\right)\\
=& \sum_{i=0}^{n-1}\eta\left(\left[a+\frac{i}{n},a+\frac{i+1}{n}\right)\times\left[w+\frac{i}{n},y+\frac{i}{n}\right)\times \left(b+\frac{i+1}{n}, \infty\right)\right).
\end{align*}
Note that the set
$$
\bigcup_{i=0}^{n-1}\left(\left[a+\frac{i}{n},a+\frac{i+1}{n}\right)\times \left[w+\frac{i}{n},y+\frac{i}{n}\right)\times \left(b+\frac{i+1}{n}, \infty\right)\right)
$$
contains
$$
\left\{(z_1,z_2,z_3): z_1\in[a,a+1), z_2\in\left[z_1+w-a, z_1+y-a-\ve\right), z_3\in(z_1+b-a+\ve, \infty) \right\},
$$
and is contained in
$$
\left\{(z_1,z_2,z_3): z_1\in[a,a+1), z_2\in\left[z_1+w-a-\ve, z_1+y-a\right), z_3\in(z_1+b-a, \infty) \right\}.
$$

Moreover, these bounds naturally extend to the case where $\ve$ is any positive rational number. Indeed, let $\ve=\frac{m}{n}$, $m,n\in\mathbb N$. Then a similar reasoning as above leads to
\begin{align*}
& \eta\left(\left\{(z_1,z_2,z_3): z_1\in[a,a+m), z_2\in\left[z_1+w-a, z_1+y-a-\ve\right), z_3\in(z_1+b-a+\ve, \infty) \right\}\right)\\
\leq & \frac{m}{\ve}\eta([a,a+\ve)\times [w,y)\times (b+\ve, \infty))\\
\leq & \eta\left(\left\{(z_1,z_2,z_3): z_1\in[a,a+m), z_2\in\left[z_1+w-a-\ve, z_1+y-a\right), z_3\in(z_1+b-a, \infty) \right\}\right).
\end{align*}
Then by Corollary \ref{cor:invariance},
\begin{align*}
& \eta\left(\left\{(z_1,z_2,z_3): z_1\in[a,a+1), z_2\in\left[z_1+w-a, z_1+y-a-\ve\right), z_3\in(z_1+b-a+\ve, \infty) \right\}\right)\\
\leq & \frac{1}{\ve}\eta([a,a+\ve)\times [w,y)\times (b+\ve, \infty))\\
\leq & \eta\left(\left\{(z_1,z_2,z_3): z_1\in[a,a+1), z_2\in\left[z_1+w-a-\ve, z_1+y-a\right), z_3\in(z_1+b-a, \infty) \right\}\right)
\end{align*}
for any positive rational $\ve>0$. Since $\frac{1}{\ve}\eta([a,a+\ve)\times [w,y)\times (b+\ve, \infty))$ is continuous in $\ve$, by the continuity of measure, we have
\begin{multline*}
\frac{1}{\ve}\eta([a,a+\ve)\times [w,y)\times (b+\ve, \infty))\\
\to \eta \left(\{(z_1,z_2,z_3): z_1\in[a,a+1), z_2\in[z_1+w-a, z_1+y-a), z_3\in (z_1+b-a, \infty)\}\right)
\end{multline*}
as $\ve\to 0$. This is exactly $\mu^{(a,b)}|_{[u,v]}([w,y))$ defined in (\ref{e:mu}). The convergence to $\nu^{(a,b)}|_{[u,v]}$ can be shown symmetrically.
\end{proof}

We now prove the main result of this paper. Denote by $\mathring I$ the interior of the compact interval $I$, and recall that $\dot{\vf}^t(x)=\frac{\partial \vf(x,t)}{\partial t}|_{x,t}$. In addition, for any flow $\vf$ on $\mathbb R$ satisfying Assumptions (\ref{cond.C1}) and (\ref{cond.isolated}) and a given interval $[a,b]$ between two consecutive extended fixed points of $\vf$, introduce measures $\mu^{(a,b)}_\vf$ and $\nu^{(a,b)}_\vf$ as the pull-backs of $\mu^{(a,b)}$ and $\nu^{(a,b)}$ under the bijection $\tau$, which is defined using any given reference point $x_0$ between these two extended fixed points. More precisely, assuming that $\tau$ is increasing, then define measure $\mu^{(a,b)}_\vf$ on $(a,b)$ by
\begin{multline*}
\mu^{(a,b)}_\vf ([w,y)):=\eta \left((z_1,z_2,z_3): \tau(z_1)\in[\tau(a),\tau(a+1)),\right.\\
\left.\tau(z_2)\in[\tau(z_1)+\tau(w)-\tau(a), \tau(z_1)+\tau(y)-\tau(a)), \tau(z_3)\in (\tau(z_1)+\tau(b)-\tau(a), \infty)\right)
\end{multline*}
for all $w,y\in(a,b), w<y$. $\nu^{(a,b)}_\vf$ is defined similarly. The case where $\tau$ is decreasing is symmetric.

\begin{theorem}\label{thm:representation}
Let $\vf$ be a flow on $\mathbb R$ satisfying Assumptions (\ref{cond.C1}) and (\ref{cond.isolated}), and $L$ be a $\vf$-stationary intrinsic random location. Let $\alpha, \beta$ be two consecutive points in $\bar{\Phi}_0$. Then for any $I=[a,b]\subset (\alpha, \beta)$, the distribution of $L(I)$ is absolutely continuous in $\mathring I$, and it has a c\`{a}dl\`{a}g density function, denoted by $f$. Moreover, $f$ satisfies
\begin{equation}\label{e:signed}
\dot{\vf}^0(x_2) f(x_2)-\dot{\vf}^0(x_1) f(x_1)=\nu^{(a,b)}_\vf((x_1, x_2])-\mu^{(a,b)}_\vf((x_1, x_2])
\end{equation}
for any $x_1\leq x_2$, $x_1, x_2\in \mathring{I}$.
\end{theorem}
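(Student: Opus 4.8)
The plan is to reduce everything to the translation (stationary) case via Remark \ref{rem:transform}, establish a conservation law there by a ``sliding window'' argument, and transport the conclusion back through the conjugating map $\tau$.

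\textbf{Step 1: reduction to the translation case.} By Remark \ref{rem:transform}, fix a reference point $x_0\in(\alpha,\beta)$, let $\tau\colon(\alpha,\beta)\to\R$ be the associated bijection (assume it is increasing; the decreasing case is symmetric), and put $L'(J):=\tau(L(\tau^{-1}(J)))$, a \emph{stationary} intrinsic random location. With $[a',b']:=\tau([a,b])$ we have $L'([a',b'])=\tau(L([a,b]))$; since $\tau$ is a $C^1$ diffeomorphism, $L(I)$ is absolutely continuous on $\mathring I$ iff $L'([a',b'])$ is absolutely continuous on $(a',b')$, and if $\hat f'$ is a c\`adl\`ag density of the latter then, by \eqref{e:deriv},
\[
f(x):=\hat f'(\tau(x))\,\tau'(x)=\frac{\hat f'(\tau(x))}{\dot\vf^0(x)}
\]
is a density of $L(I)$, c\`adl\`ag because $\tau$ is $C^1$ increasing and $\dot\vf^0$ is continuous and nonvanishing on $(\alpha,\beta)$. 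As $\mu^{(a,b)}_\vf,\nu^{(a,b)}_\vf$ are by construction the $\tau$-images of the measures $\mu^{(a',b')},\nu^{(a',b')}$ attached to $L'$, identity \eqref{e:signed} for $L$ is equivalent to $\hat f'(x_2')-\hat f'(x_1')=\nu^{(a',b')}((x_1',x_2'])-\mu^{(a',b')}((x_1',x_2'])$ for $L'$. Hence it suffices to treat the case $\vf^t(x)=x+t$, $\dot\vf^0\equiv1$, $L$ stationary, which we now assume.

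\textbf{Step 2: a sliding identity.} Fix $a<u<v<b$ and $0<\ve<\min\{u-a,\,b-v\}$. The combinatorial core is the \emph{exact} identity
\[
P\big(L([a,b])\in[u,v]\big)-P\big(L([a+\ve,b+\ve])\in[u,v]\big)=N_{\ve,[u,v]}-M_{\ve,[u,v]}.
\]
To prove it, insert the interval $[a,b+\ve]$ and split the left side as
\[
\Big(P\big(L([a,b])\in[u,v]\big)-P\big(L([a,b+\ve])\in[u,v]\big)\Big)+\Big(P\big(L([a,b+\ve])\in[u,v]\big)-P\big(L([a+\ve,b+\ve])\in[u,v]\big)\Big).
\]
By Lemma \ref{lem:comparison} the first parenthesis is $\ge0$ and the second $\le0$; applying stability under restriction and consistency of existence (Definition \ref{def:irl}) they equal $P\big(L([a,b])\in[u,v],\,L([a,b+\ve])\in(b,b+\ve]\big)$ and $-P\big(L([a,b+\ve])\in[a,a+\ve),\,L([a+\ve,b+\ve])\in[u,v]\big)$ respectively. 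One then replaces $L([a,b+\ve])$ by $L([a+\ve,b+\ve])$ in the first term and by $L([a,b])$ in the second, again using stability under restriction; the crucial point is that the ``mixed'' events are empty, e.g.\ $\{L([a,b+\ve])\in(b,b+\ve]\}\cap\{L([a+\ve,b+\ve])\in[u,v]\}=\varnothing$ since the first event forces $L([a+\ve,b+\ve])=L([a,b+\ve])\in(b,b+\ve]$, disjoint from $[u,v]$. I expect this step to be the main obstacle: the identity must hold \emph{on the nose} rather than merely up to $o(\ve)$, so every endpoint configuration arising from Definition \ref{def:irl} must be checked with care.

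\textbf{Step 3: passing to the limit and transporting back.} By Proposition \ref{prop:bound} the law of $L([a,b])$ is absolutely continuous on $(a,b)$; let $f_0$ be a density and $D_\ve(x):=\ve^{-1}\int_{x-\ve}^{x}f_0$. Stationarity gives $P(L([a+\ve,b+\ve])\in[u,v])=P(L([a,b])\in[u-\ve,v-\ve])$, so the left side of the identity above equals $\ve\,(D_\ve(v)-D_\ve(u))$. On the right, $\mu_{\ve,[u,v]}$ is carried by the fixed compact $[u,v]$, so the total mass of $\ve^{-1}M_{\ve,[u,v]}\mu_{\ve,[u,v]}$ is $\ve^{-1}M_{\ve,[u,v]}$; testing the vague convergence of Proposition \ref{prop:limit} against a function in $C_c(\R)$ equal to $1$ on $[u,v]$ gives $\ve^{-1}M_{\ve,[u,v]}\to\mu^{(a,b)}([u,v])$ and likewise $\ve^{-1}N_{\ve,[u,v]}\to\nu^{(a,b)}([u,v])$ as $\ve\downarrow0$, and in particular $\mu^{(a,b)},\nu^{(a,b)}$ are Radon on $(a,b)$. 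Dividing the identity by $\ve$,
\[
D_\ve(v)-D_\ve(u)\ \xrightarrow[\ve\downarrow0]{}\ \nu^{(a,b)}([u,v])-\mu^{(a,b)}([u,v])\qquad\text{for all }a<u<v<b.
\]
By Lebesgue's differentiation theorem $D_\ve(x)\to f_0(x)$ at every Lebesgue point of $f_0$, hence a.e.; choosing one such $x_\ast$ that is not an atom of $\nu^{(a,b)}-\mu^{(a,b)}$ (the atoms being countable), the last display forces $D_\ve$ to converge at \emph{every} point of $(a,b)$ to the c\`adl\`ag function $f$ characterised by $f(x_\ast)=f_0(x_\ast)$ and $df=d\nu^{(a,b)}-d\mu^{(a,b)}$ (the difference of the ``distribution functions'' of two Radon measures). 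Then $f=f_0$ a.e., so $f$ is a c\`adl\`ag density of $L([a,b])$ on $(a,b)$, and by construction $f(x_2)-f(x_1)=\nu^{(a,b)}((x_1,x_2])-\mu^{(a,b)}((x_1,x_2])$ for $x_1\le x_2$ in $(a,b)$. Transporting this through $\tau$ as in Step 1 (the Jacobian factor $\tau'=1/\dot\vf^0$ produces the terms $\dot\vf^0(x_i)f(x_i)$, and the measures pull back to $\mu^{(a,b)}_\vf,\nu^{(a,b)}_\vf$) yields \eqref{e:signed}.
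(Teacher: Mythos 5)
Your reduction (Step 1) is exactly Remark~\ref{rem:transform}, and your ``sliding identity'' in Step 2 is, once you check the mixed events as you anticipate, the paper's decomposition~(\ref{e:expectation}) evaluated at $g=\mathbf 1_{[u,v]}$; it is correct, and the verification you sketch (stability under restriction forces the mixed events to be empty or to coincide with the desired events, consistency of existence rules out $\infty$) does go through. Where you depart from the paper is the passage to the limit: the paper tests the decomposition against $g\in C^\infty_C((u,v))$, gets a generalized-function identity $\int g'\,dF=\int g\,d(\mu^{(a,b)}-\nu^{(a,b)})$, and integrates it up, whereas you keep the indicator $g=\mathbf 1_{[u,v]}$, divide by $\ve$, and close with Lebesgue differentiation. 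This is a more elementary finish, but it forces you to care about what happens at the endpoints $u,v$, which the paper avoids precisely by taking $g$ supported in the open interval.

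This is where your Step 3 is slightly off. The claim $\ve^{-1}M_{\ve,[u,v]}\to\mu^{(a,b)}([u,v])$ does not follow from Proposition~\ref{prop:limit}. Its proof only establishes $\ve^{-1}\lambda_\ve([w,y))\to\mu^{(a,b)}([w,y))$ on half-open intervals. The measures $\lambda_\ve$ are absolutely continuous (being restrictions of the law of $L([a+\ve,b+\ve])$, which by Proposition~\ref{prop:bound} has a density), so $\lambda_\ve(\{v\})=0$ and $\ve^{-1}M_{\ve,[u,v]}=\ve^{-1}\lambda_\ve([u,v))\to\mu^{(a,b)}([u,v))$. But $\mu^{(a,b)}$ \emph{can} have an atom at $v$: an atom of $\mu^{(a,b)}$ corresponds to $\eta$-mass on a tilted plane $\{z_2-z_1=\text{const}\}$, which is not excluded by the negligibility of the coordinate planes from Theorem~\ref{thm:relation}, and indeed these atoms are exactly the jumps of the c\`adl\`ag density that the theorem is designed to accommodate. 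So testing the vague convergence against a $g\in C_c$ that equals $1$ on the closed set $[u,v]$ is not licensed by the proof of Proposition~\ref{prop:limit}; what one actually gets is $\mu^{(a,b)}((u,v))\le\liminf\ve^{-1}M_{\ve,[u,v]}\le\limsup\ve^{-1}M_{\ve,[u,v]}\le\mu^{(a,b)}([u,v])$. For the same reason your statement that ``$D_\ve$ converges at every point of $(a,b)$ to the c\`adl\`ag $f$'' is not correct: the one-sided average $D_\ve(x)=\ve^{-1}\int_{x-\ve}^x f$ converges to $f(x-)$, not $f(x)$, at jump points. The fix is cheap and is essentially already latent in your argument: run the whole Step 3 only for $u,v$ that are simultaneously Lebesgue points of $f_0$ and non-atoms of $\nu^{(a,b)}-\mu^{(a,b)}$ (a co-countable, hence full-measure, set), where $[u,v)$, $(u,v)$ and $[u,v]$ carry the same mass and all the limits collapse to $f_0(v)-f_0(u)=(\nu^{(a,b)}-\mu^{(a,b)})((u,v])$. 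Then define $f(x):=f_0(x_\ast)+(\nu^{(a,b)}-\mu^{(a,b)})((x_\ast,x])$ as you do; the preceding identity shows $f=f_0$ on that dense full-measure set, hence a.e., and the c\`adl\`ag property and~(\ref{e:signed}) follow. With this repair the proof is sound and genuinely a bit more hands-on than the paper's distribution-theoretic version.
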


\begin{proof}
By Remark (\ref{rem:transform}), it suffices to prove the result for $\vf^t(x)=x+t$, where $\dot{\vf}^0(x)$ becomes the constant 1, and $\mu^{(a,b)}_\vf$ and $\nu^{(a,b)}_\vf$ are simply $\mu^{(a,b)}$ and $\nu^{(a,b)}$ defined before Proposition \ref{prop:limit}.

Let $C^\infty_C((u,v))$ be the set of smooth functions from $\bar{\mathbb R}$ to $\mathbb R$ with support in $(u,v)$, and $g$ be any function in $C^\infty_C((u,v))$. By stationarity, for any $\ve>0$, we have
$$
E[g(L([a+\ve, b+\ve]))]=E[g(L([a,b])+\ve)],
$$
hence
\begin{equation}\label{e:main}
E[g(L([a+\ve, b+\ve]))]-E[g(L([a, b]))]=E[g(L([a,b])+\ve)]-E[g(L([a, b]))].
\end{equation}

Denote by $F$ the distribution of $L([a,b])$, then the right hand side of (\ref{e:main}) can be rewritten as
$$
\int_a^b (g(s+\ve)-g(s))dF(s).
$$
Since $g$ is smooth and compactly supported, $g'$ is bounded, hence $g$ is uniformly Lipschitz. As a result, Dominated Convergence Theorem applies and we have
\begin{align}
& \lim_{\ve\to 0}\frac{1}{\ve}\left(E[g(L([a,b])+\ve)]-E[g(L([a, b]))]\right)\nonumber\\
= & \lim_{\ve\to 0}\frac{1}{\ve} \int_a^b (g(s+\ve)-g(s))dF(s)\nonumber\\
= & \int_a^b g'(s)dF(s)=\int_u^v g'(s)dF(s).\label{e:right}
\end{align}

For the left hand side of (\ref{e:main}), we have
\begin{align*}
& E[g(L([a+\ve, b+\ve]))]-E[g(L([a, b]))]\\
= & E[g(L([a+\ve, b+\ve])); L([a,b])\in [a,a+\ve)]-E[g(L([a, b])); L([a+\ve,b+\ve])\in (b,b+\ve]]\\
& +E[g(L([a+\ve, b+\ve])); L([a,b])\in [a+\ve, b]]-E[g(L([a, b])); L([a+\ve,b+\ve])\in [a+\ve,b]],
\end{align*}
where the notation $E[X;A]$ stands for the expectation of $X$ restricted on $A$, \textit{i.e.}, $E[X;A]=E[X\mathbf 1_A]$. Since $g$ is supported on $[u,v]\subset (a,b)$, for $\ve<u-a$,
\begin{align*}
& E[g(L([a+\ve, b+\ve])); L([a,b])\in [a+\ve, b]]\\
= & E[g(L([a+\ve, b+\ve])); L([a,b])\in [a+\ve, b], L([a+\ve, b+\ve])\in [a+\ve, b]]\\
= & E[g(L([a, b])); L([a,b])\in [a+\ve, b], L([a+\ve, b+\ve])\in [a+\ve, b]]\\
= & E[g(L([a, b])); L([a+\ve,b+\ve])\in [a+\ve,b]],
\end{align*}
where the equality in the middle comes from the stability under restriction property of $L$. Therefore, we have
\begin{align}
& E[g(L([a+\ve, b+\ve]))]-E[g(L([a, b]))]\nonumber\\
= &  E[g(L([a+\ve, b+\ve])); L([a,b])\in [a,a+\ve)]-E[g(L([a, b])); L([a+\ve,b+\ve])\in (b,b+\ve]]\nonumber\\
= & E[g(L([a+\ve, b+\ve])); L([a,b])\in [a,a+\ve), L([a+\ve, b+\ve])\in [u,v]]\nonumber\\
& -E[g(L([a, b])); L([a+\ve,b+\ve])\in (b,b+\ve], L([a,b])\in [u,v]]\label{e:expectation}\\
= & \int_u^v g(s)M_{\ve, [u,v]}d\mu_{\ve, [u,v]}(s)-\int_u^v g(s)N_{\ve, [u,v]}d\nu_{\ve, [u,v]}(s).\label{e:integral}
\end{align}

Combining (\ref{e:integral}) with Proposition \ref{prop:limit}, we have
\begin{align*}
&\lim_{\ve\to 0}\frac{1}{\ve}\left(E[g(L([a+\ve, b+\ve]))]-E[g(L([a, b]))]\right)\\
= & \int_u^v g(s)d(\mu^{(a,b)}-\nu^{(a,b)})(s),
\end{align*}
hence by (\ref{e:main}) and (\ref{e:right}),
$$
\int_u^v g'(s)dF(s)=\int_u^v g(s)d(\mu^{(a,b)}-\nu^{(a,b)})(s)
$$
for all $g\in C^\infty_C((u,v))$. This means, the signed measure on $(u,v)$ given by $d(\nu^{(a,b)}-\mu^{(a,b)})(s)$ is a derivative of the measure given by $dF(s)$ in the sense of generalized function. (Generalized functions are alternatively called distributions. In this paper we would use the term ``generalized functions'' to avoid confusion with the probability distributions of the random locations. Readers are referred to \cite{barros1973introduction} for an overview of the generalized functions.) Consequently, we have
$$
F((u,x])=\int_u^x \nu^{(a,b)}((u,s])-\mu^{(a,b)}((u,s])+c ~ds
$$
for all $x\in(u,v)$ and some constant $c$. Note that $c$ is inside the integral as it is a constant in the sense of generalized function. As a result, $F$ is differentiable on $(u,v)$; its derivative, denoted as $f$, satisfies
\begin{equation}\label{e:density}
f(x)=\nu^{(a,b)}((u,x])-\mu^{(a,b)}((u,x])+c,
\end{equation}
for almost all $x$ in $(u,v)$. It is easy to see that if we indeed define $f$ according to (\ref{e:density}) at every point $x\in(u,v)$, then such defined $f$ is still a version of the density, and $f$ is c\`{a}dl\`{a}g on $(u,v)$. Taking $u\downarrow a$ and $v\uparrow b$ shows that $F$ is absolutely continuous on $(a,b)$, and $f(x)=\nu^{(a,b)}((x_0,x])-\mu^{(a,b)}((x_0,x])+c$, $x\in(a,b)$ is a c\`{a}dl\`{a}g version of the density of $F$ on $(a,b)$. Here $x_0$ is an arbitrary fixed point in $(a,b)$, and $\nu^{(a,b)}((x_0,x])$ (resp. $\mu^{(a,b)}((x_0,x])$) is understood as $-\nu^{(a,b)}((x, x_0])$ (resp. $-\mu^{(a,b)}((x, x_0])$) when $x<x_0$. Moreover, taking $x=x_0$ leads to $c=f(x_0)$. Therefore, we have
$$
f(x)=f(x_0)+\nu^{(a,b)}((x_0,x])-\mu^{(a,b)}((x_0,x]), \quad x\in(a,b),
$$
or alternatively,
$$
f(x_2)-f(x_1)=\nu^{(a,b)}((x_1,x_2])-\mu^{(a,b)}((x_1, x_2]), \quad x_1, x_2\in(a,b), x_1\leq x_2.
$$

The proof is completed by applying the change of variable given in Remark (\ref{rem:transform}) for general flow $\vf$ satisfying Assumptions (\ref{cond.C1}) and (\ref{cond.isolated}).
\end{proof}

\begin{remark}
The significance of Theorem \ref{thm:representation} resides in the fact that while the left hand side of (\ref{e:signed}) is about the density function of a random location, the right hand side is the difference of two monotone functions. In other words, the probability density function of the random location behaves, surprisingly, more like the cumulative distribution function of a signed measure than a typical density function. Such a result is rare given that a density function is above all only uniquely defined in the almost sure sense. Hence, Theorem \ref{thm:representation} first guarantees the existence of a particular c\`{a}dl\`{a}g version of the density function, then equates it with the cumulative distribution function of a signed measure. Some consequences of the density function behaving as a cumulative distribution function can be seen in the following Corollaries.
\end{remark}

A simple rewrite of the result in Theorem \ref{thm:representation} gives rise to a conservation law when the interval of interest moves according to the flow $\vf$, which indicates clearly that what we obtained is, by nature, a Noether theorem. More precisely, consider a given interval $[a_0, b_0]$ between two consecutive extended fixed points, $\alpha$ and $\beta$, of $\vf$. Let $L$ be a $\vf$-stationary intrinsic random location. For any $x\in(\alpha,\beta)$ and $t\in\mathbb R$ such that $x\in(\vf^t(a_0), \vf^t(b_0))$, denote by $f_t(x)$ the density of $L([\vf^t(a_0), \vf^t(b_0)])$ at point $x$. Moreover, fix a reference point $x_0\in(a_0, b_0)$, and define the single-variable function $K(y)=\nu^{(a_0,b_0)}_\vf((x_0,y])-\mu^{(a_0,b_0)}_\vf((x_0,y])$ for $y\in(a_0, b_0)$, where $\nu^{(a_0,b_0)}_\vf((x_0,y])$ (resp. $\mu^{(a_0,b_0)}_\vf((x_0,y])$) is understood as $-\nu^{(a_0,b_0)}_\vf((y,x_0])$ (resp. $-\mu^{(a_0,b_0)}_\vf((y,x_0])$) for $y<x_0$. Then we have

\begin{corollary}
$$
\dot{\vf}^0(x)f_t(x)-K((\vf^t)^{-1}(x))
$$
is a constant in $t$ for $t$ satisfying $x\in(\vf^t(a_0), \vf^t(b_0))$.
\end{corollary}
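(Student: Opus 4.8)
The plan is to derive the Corollary directly from Theorem \ref{thm:representation} by tracking how the density changes as the interval $[a_0,b_0]$ is carried along the flow. First I would fix $t$ with $x\in(\vf^t(a_0),\vf^t(b_0))$ and write $I_t=[\vf^t(a_0),\vf^t(b_0)]$. Since $L$ is $\vf$-stationary, $L(I_t)\stackrel{d}{=}\vf^t(L([a_0,b_0]))$, so the density $f_t$ of $L(I_t)$ is the push-forward under $\vf^t$ of the density $f_0$ of $L([a_0,b_0])$; equivalently, $f_t$ is obtained from $f_0$ by the usual change-of-variables Jacobian. The cleanest route, however, is not to compute this Jacobian directly but to invoke Theorem \ref{thm:representation} applied to the interval $I_t$ itself: the theorem gives a c\`adl\`ag density $f_t$ for $L(I_t)$ satisfying
\begin{equation}\label{e:corstep}
\dot{\vf}^0(x_2)f_t(x_2)-\dot{\vf}^0(x_1)f_t(x_1)=\nu^{(\vf^t(a_0),\vf^t(b_0))}_\vf((x_1,x_2])-\mu^{(\vf^t(a_0),\vf^t(b_0))}_\vf((x_1,x_2])
\end{equation}
for $x_1\le x_2$ in $\mathring I_t$. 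So the key point reduces to identifying the right-hand side of \eqref{e:corstep} in terms of the fixed function $K$ attached to the base interval $[a_0,b_0]$.

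Next I would establish the transport identity for the measures $\mu_\vf$ and $\nu_\vf$: namely that $\nu^{(\vf^t(a_0),\vf^t(b_0))}_\vf-\mu^{(\vf^t(a_0),\vf^t(b_0))}_\vf$, as a signed measure on $(\vf^t(a_0),\vf^t(b_0))$, is the push-forward under $\vf^t$ of $\nu^{(a_0,b_0)}_\vf-\mu^{(a_0,b_0)}_\vf$. This should follow from the definitions of $\mu^{(a,b)}_\vf$ and $\nu^{(a,b)}_\vf$ as pull-backs under $\tau$ together with the flow identity \eqref{e:identity}, $\tau(z)=\tau((\vf^t)^{-1}(z))+t$: replacing $a_0,b_0,w,y,z_i$ by their images under $\vf^t$ shifts every argument of $\tau$ by the same constant $t$, and because the defining sets for $\mu^{(a,b)}_\vf$ involve only \emph{differences} of $\tau$-values (and the normalization $\tau(z_1)\in[\tau(a),\tau(a)+(\tau(a+1)-\tau(a))]$, which must itself be interpreted consistently — here I would be careful that the normalizing "unit" in $z_1$ transforms correctly, or equivalently pass through the stationary picture of Remark \ref{rem:transform} where it is just Lebesgue measure), those shifts cancel. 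Concretely: set $F(z)=\nu^{(a_0,b_0)}_\vf((x_0,z])-\mu^{(a_0,b_0)}_\vf((x_0,z])=K(z)$; then for the transformed interval the corresponding primitive evaluated at $x$, with reference point $\vf^t(x_0)$, equals $K((\vf^t)^{-1}(x))$. Integrating \eqref{e:corstep} against this primitive gives
\[
\dot{\vf}^0(x)f_t(x)=K((\vf^t)^{-1}(x))+c_t
\]
for some constant $c_t$ depending only on $t$ (it is the value of $\dot{\vf}^0 f_t$ at the reference point, minus $K$ there).

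Finally I would show $c_t$ does not depend on $t$. Here I would use the consistency of the two constructions: the function $\dot{\vf}^0(x)f_t(x)-K((\vf^t)^{-1}(x))$ must take the same value whether computed from $I_t$ or, on the overlap of validity, from $I_{t'}$ for a nearby $t'$ — because both equal $\dot{\vf}^0(x)f_s(x)$ minus a quantity that, by the transport identity, is the same real number (namely $K$ evaluated at the $\vf^{s}$-preimage, which is intrinsic to the flow line through $x$ and independent of which interval along the flow we used). More carefully, for two parameters $t,t'$ with $x$ in both $\mathring I_t$ and $\mathring I_{t'}$, stationarity gives $f_{t'}(x)=f_t(x)$ is false in general, but $\dot{\vf}^0(x)f_{t'}(x)-\dot{\vf}^0(x)f_t(x)$ can be read off from \eqref{e:corstep} applied with a common point $x_1$ in $\mathring I_t\cap\mathring I_{t'}$, and equals $\big(\nu_\vf-\mu_\vf\big)$ over $(x_1,x]$ for $I_{t'}$ minus that for $I_t$, which by the transport identity is exactly $K((\vf^{t'})^{-1}(x))-K((\vf^{t'})^{-1}(x_1))-K((\vf^{t})^{-1}(x))+K((\vf^{t})^{-1}(x_1))$; combined with the already-established form of $\dot{\vf}^0 f$ this forces $c_t=c_{t'}$. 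Thus $\dot{\vf}^0(x)f_t(x)-K((\vf^t)^{-1}(x))$ is constant in $t$ on the admissible range, which is the assertion.

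The main obstacle I anticipate is the bookkeeping in the transport identity for $\mu^{(a,b)}_\vf$ and $\nu^{(a,b)}_\vf$: the definitions mix a genuinely $\tau$-difference-invariant structure in the $z_2,z_3$ coordinates with a normalization in $z_1$ (the "$[a,a+1)$" slab, rendered via $\tau$) that is not itself flow-invariant, so one must verify that the $z_1$-normalization only contributes an overall multiplicative/additive bookkeeping that is absorbed into the constant $c_t$, or — cleaner — carry out the whole argument after the reduction to stationarity in Remark \ref{rem:transform}, where $\dot{\vf}^0\equiv 1$, $K$ is literally $\nu^{(a,b)}((x_0,\cdot])-\mu^{(a,b)}((x_0,\cdot])$, and the transport identity is just the translation-invariance of $\eta$ from Corollary \ref{cor:invariance}, then transform back at the very end. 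I would take the latter route.
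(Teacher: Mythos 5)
Your opening observation---that $L(I_t)\laweq \vf^t(L([a_0,b_0]))$, so $f_t$ is the push-forward of $f_0$ under $\vf^t$---is exactly the key step, but you then set it aside (``not to compute this Jacobian directly'') and instead apply Theorem~\ref{thm:representation} to each $I_t$ on its own. That choice is where the proposal breaks down. After the theorem and your transport identity you arrive at $\dot{\vf}^0(x)f_t(x)=K((\vf^t)^{-1}(x))+c_t$, and the only remaining task is to show $c_t$ is $t$-independent; but the ``consistency'' argument in your third paragraph does not do this. Subtracting the identity of Theorem~\ref{thm:representation} for $I_t$ and $I_{t'}$ at a common pair $x_1,x$ produces $\dot{\vf}^0(x)\bigl(f_{t'}(x)-f_t(x)\bigr)-\dot{\vf}^0(x_1)\bigl(f_{t'}(x_1)-f_t(x_1)\bigr)$ on the left, not the single-point difference you claim to ``read off''; substituting the already-established form $\dot{\vf}^0 f_t=K\circ(\vf^t)^{-1}+c_t$ into both sides reduces the whole computation to the tautology $c_{t'}-c_t=c_{t'}-c_t$. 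Nothing there pins the constant down, because nothing there uses the fact that $f_t$ and $f_0$ describe the \emph{same} law transported along the flow. (The parenthetical claim that $K$ evaluated at the $\vf^t$-preimage of $x$ is ``intrinsic to the flow line through $x$ and independent of which interval along the flow we used'' is also incorrect as stated: $K((\vf^t)^{-1}(x))$ does depend on $t$, which is the whole point of the corollary.)

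The missing ingredient is precisely the relation you discarded, made explicit via Remark~\ref{rem:transform}: by (\ref{e:deriv}) and (\ref{e:identity}), $\dot{\vf}^0(x)f_t(x)=f'_t(\tau(x))=f'_0(\tau(x)-t)=\dot{\vf}^0((\vf^t)^{-1}(x))f_0((\vf^t)^{-1}(x))$, where $f'_t$ is the density of the stationary location $L'=\tau\circ L\circ\tau^{-1}$ on $[\tau(a_0)+t,\tau(b_0)+t]$. With this in hand, a \emph{single} application of Theorem~\ref{thm:representation} to the fixed base interval $[a_0,b_0]$ at the point $(\vf^t)^{-1}(x)$ gives $\dot{\vf}^0((\vf^t)^{-1}(x))f_0((\vf^t)^{-1}(x))=\dot{\vf}^0(x_0)f_0(x_0)+K((\vf^t)^{-1}(x))$, hence $\dot{\vf}^0(x)f_t(x)-K((\vf^t)^{-1}(x))=\dot{\vf}^0(x_0)f_0(x_0)$, which is manifestly free of $t$. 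This is the paper's proof; it needs neither a transport identity for $\mu^{(\cdot)}_\vf,\nu^{(\cdot)}_\vf$ across different intervals nor a separate argument for the constant, because the theorem is invoked only once. Your fallback plan of ``carrying out the whole argument after the reduction to stationarity'' is the right instinct, but it must include the translation relation $f'_t(u)=f'_0(u-t)$ for the densities themselves, not just Corollary~\ref{cor:invariance} for $\eta$.
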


\begin{proof}
Since $L$ is $\vf$-stationary, by the change of variable formula and (\ref{e:deriv}),
$$
\dot{\vf}^0(x)f_t(x)=f'_t(\tau(x))=f'_0(\tau(x)-t)=f'_0(\tau((\vf^t)^{-1}(x)))=\dot{\vf}^0((\vf^t)^{-1}(x))f_0((\vf^t)^{-1}(x)),
$$
where $f'_t$ is the density function of the stationary intrinsic random location $L'$ defined by
$$
L'(I)=\tau(L(\tau^{-1}(I)))
$$
on interval $I=[\tau(a_0)+t, \tau(b_0)+t]$.

By Theorem \ref{thm:representation}, we have
\begin{align*}
& \dot{\vf}^0((\vf^t)^{-1}(x))f_0((\vf^t)^{-1}(x))\\
=& \dot{\vf}^0(x_0)f_0(x_0)+\nu^{(a_0,b_0)}_\vf((x_0, (\vf^t)^{-1}(x)])-\mu^{(a_0,b_0)}_\vf((x_0, (\vf^t)^{-1}(x)])\\
=& \dot{\vf}^0(x_0)f_0(x_0)+K((\vf^t)^{-1}(x)),
\end{align*}
hence
$$
\dot{\vf}^0(x)f_t(x)-K((\vf^t)^{-1}(x))=\dot{\vf}^0(x_0)f_0(x_0),
$$
which is a constant in $t$ for $t$ satisfying $x\in(\vf^t(a_0), \vf^t(b_0))$.
\end{proof}

Also as a consequence of Theorem \ref{thm:representation}, we have the following result, which shows that the total variation of $\dot{\vf}^0(x)f(x)$ is bounded by its values and limits.

Denote by $TV^+_{(u,v)}(f), TV^-_{(u,v)}(f)$ and $TV_{(u,v)}(f)$ the positive variation, negative variation and total variation of the function $f$ on the interval $(u,v)$, respectively. That is,
$$
TV^+_{(u,v)}(f):=\sup_{u< x_1<\cdots<x_n< v}\sum_{i=1}^{n-1}(f(x_{i+1})-f(x_i))^+,
$$
$$
TV^-_{(u,v)}(f):=\sup_{u< x_1<\cdots<x_n< v}\sum_{i=1}^{n-1}(f(x_{i+1})-f(x_i))^-,
$$
and
$$
TV_{(u,v)}(f):=\sup_{u< x_1<\cdots<x_n< v}\sum_{i=1}^{n-1}|f(x_{i+1})-f(x_i)|,
$$
where the suprema are taken over all the partitions of $(u,v)$. Define $f(x-)=\lim_{y\uparrow x}f(y)$ to be the left limit of a c\`{a}dl\`{a}g function.

\begin{corollary}\label{cor:TV}
Let $\vf$ be a flow on $\mathbb R$ satisfying Assumptions (\ref{cond.C1}) and (\ref{cond.isolated}), and $L$ be a $\vf$-stationary intrinsic random location. Let $\alpha, \beta$ be two consecutive points in $\bar{\Phi}_0$. Then for any $I=[a,b]\in\mathcal I$ such that $I\subset (\alpha, \beta)$ and $u,v\in(a,b)$, $u<v$, the c\`{a}dl\`{a}g density function $f$ of $L(I)$ on $(a,b)$ satisfies
\begin{equation}\label{e:pos}
TV^+_{(u,v)}(\dot{\vf}^0(\cdot)f(\cdot))\leq \dot{\vf}^0(v)\min\{f(v), f(v-)\},
\end{equation}
\begin{equation}\label{e:neg}
TV^-_{(u,v)}(\dot{\vf}^0(\cdot)f(\cdot))\leq \dot{\vf}^0(u)\min\{f(u), f(u-)\},
\end{equation}
and
\begin{equation}\label{e:tot}
TV_{(u,v)}(\dot{\vf}^0(\cdot)f(\cdot))\leq \dot{\vf}^0(u)\min\{f(u), f(u-)\}+\dot{\vf}^0(v)\min\{f(v), f(v-)\}.
\end{equation}
\end{corollary}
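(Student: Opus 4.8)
The plan is to reduce to the stationary flow $\varphi^t(x)=x+t$ by Remark \ref{rem:transform}: under the bijection $\tau$, $L$ becomes a stationary intrinsic random location $L'$ whose c\`{a}dl\`{a}g density $f'$ satisfies $f'(\tau(x))=\dot{\varphi}^0(x)f(x)$ by (\ref{e:deriv}), and $\mu^{(a,b)}_\varphi,\nu^{(a,b)}_\varphi$ become the measures attached to $L'$; since $\tau$ is a monotone bijection it preserves positive, negative and total variation, so each of (\ref{e:pos})--(\ref{e:tot}) is equivalent to the same inequality for $L'$ with $\dot{\varphi}^0\equiv 1$. So I may assume $L$ stationary, and it suffices to prove, for the c\`{a}dl\`{a}g density $f$ of $L([a,b])$ on $(a,b)$ and $a<u<v<b$, that $TV^+_{(u,v)}(f)\le\min\{f(v),f(v-)\}$ and $TV^-_{(u,v)}(f)\le\min\{f(u),f(u-)\}$, after which (\ref{e:tot}) follows from $TV_{(u,v)}(f)=TV^+_{(u,v)}(f)+TV^-_{(u,v)}(f)$. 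Two facts will be used throughout: the c\`{a}dl\`{a}g version of $f$ is everywhere $\ge 0$ (it is an a.e.\ nonnegative function with one-sided limits), so also $f(u-),f(v-)\ge 0$; and, by Theorem \ref{thm:representation}, $f$ on $(a,b)$ is the ``cumulative distribution function of the signed measure $\nu^{(a,b)}-\mu^{(a,b)}$'', i.e.\ $df=\nu^{(a,b)}-\mu^{(a,b)}$ with $\nu^{(a,b)},\mu^{(a,b)}$ positive.

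Since $TV^+_{(u,v)}(f)$ and $TV^-_{(u,v)}(f)$ are the positive and negative parts of the signed measure $df$ evaluated on the open interval $(u,v)$, an elementary computation with the identities $df((u,v])=f(v)-f(u)$ and $df([u,v))=f(v-)-f(u-)$ shows that the whole statement reduces to the two one-sided estimates
\[
(df)^-((u,v])\le f(u),\qquad (df)^+([u,v))\le f(v-),
\]
i.e.\ that between $u$ and $v$ the density cannot drop by more than its value at the left endpoint, nor rise by more than its limit at the right endpoint. The naive bounds coming from a partition, $(df)^-((u,v])\le\mu^{(a,b)}((u,v])$ and $(df)^+([u,v))\le\nu^{(a,b)}([u,v))$, do not suffice, since $\mu^{(a,b)}((u,v])$ can exceed $f(u)$ (a narrow bump of $f$ strictly inside $(u,v)$ already makes this happen): the estimates must use the full structure of an intrinsic random location, which in particular forbids such free-standing bumps.

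To prove, say, $(df)^-((u,v])\le f(u)$, I would leave the level of Theorem \ref{thm:representation} --- whose output, bounded variation of $f$, does not force this --- and return to the probabilistic construction. By Proposition \ref{prop:limit}, $\mu^{(a,b)}((u,v])$ is a limit of the $\frac{1}{\varepsilon}M_{\varepsilon,[u,v]}$ and the $\nu^{(a,b)}$-mass lying to the right of $(u,v]$ is a limit of the corresponding $\frac{1}{\varepsilon}N_{\varepsilon,[\cdot]}$-quantities; and, as already exploited inside the proof of Theorem \ref{thm:relation}, stability under restriction forces the location to move \emph{monotonically} as the interval is transported along the flow --- once $L$ has crossed an endpoint of the moving window it never crosses back. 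Following this monotone evolution (here of $L([a-\varepsilon,b])$ as $\varepsilon\downarrow 0$), one should be able to couple each unit of downward mass of $f$ in $(u,v]$ either with a unit of upward mass to its right (which cancels against it in $df$) or with the event that $L([a,b])\in[a,u]$, and then bound the latter probability rate by $f(u)$ via the comparison Lemma \ref{lem:comparison}; this would give $(df)^-((u,v])\le f(u)$. The estimate $(df)^+([u,v))\le f(v-)$ is entirely parallel, transporting the window on the right and coupling the upward mass of $f$ with the event that $L([a,b])$ lies just below $v$.

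The step I expect to be the main obstacle is exactly this coupling argument: making precise that ``the escaping probability mass moves monotonically along the flow, so its rate cannot exceed the density at the boundary point it is heading toward''. This is the only place where the defining axioms of an intrinsic random location (stability under restriction and consistency of existence) --- and not merely the representation of $f$ from Theorem \ref{thm:representation} --- are genuinely needed, and it is where the counterexample to the naive bound gets ruled out.
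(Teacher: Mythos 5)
Your reduction to the stationary flow and the use of Theorem \ref{thm:representation} to write $\mathrm{d}f=\nu^{(a,b)}-\mu^{(a,b)}$ match the paper exactly; the difference comes afterward. You dismiss the ``naive'' chain $TV^-_{(u,v)}(f)\le\mu^{(a,b)}((u,v])$, $TV^+_{(u,v)}(f)\le\nu^{(a,b)}((u,v])$, on the grounds that $\mu^{(a,b)}((u,v])$ could exceed $f(u)$. That objection is abstractly sensible for a generic decomposition $\mathrm{d}f=\nu-\mu$, but it is not an obstacle here: the paper's Proposition \ref{prop:totalmass} asserts exactly $\mu^{(a,b)}([u,v])\le f(u-)$ and $\nu^{(a,b)}([u,v])\le f(v)$, and its proof is a short direct estimate, not an elaborate coupling. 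One bounds $\mu^{(a,b)}([u,v'))$ by $\limsup_{\ve\to 0}\frac{1}{\ve}M_{\ve,[u,v']}$ (since $\mu_{\ve,[u,v']}$ is a probability measure), rewrites $M_{\ve,[u,v']}\le P(L([a+\ve,b])\in[u,v'])-P(L([a,b])\in[u,v'])$ using stability under restriction and consistency of existence, then bounds this telescoping difference by $P(L([a,b])\in[u-\ve,u))$ via the computation (\ref{e:boundbyf}), and finally divides by $\ve$ and lets $\ve\downarrow 0$ to obtain $f(u-)$. With that in hand, the paper's proof of the corollary is precisely the partition bound $\sum_i(f(x_{i+1})-f(x_i))^+\le\nu^{(a,b)}((u,v])\le f(v)$, followed by the cadlag limit $TV^+_{(u,v)}(f)=\lim_{y\uparrow v}TV^+_{(u,y)}(f)\le f(v-)$ to get the minimum, with the $TV^-$ estimate by symmetry and addition for $TV$.

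So the genuine gap in your proposal is that you stop exactly where the real work is: you correctly identify that the crucial step is a bound on the escape rate at the moving boundary (``the escaping probability mass cannot exceed the density at the boundary point it is heading toward''), but you declare that this requires an as-yet-unconstructed coupling and leave it open. That step is in fact Proposition \ref{prop:totalmass}, and your intuition about monotone transport under the flow is what its proof makes precise --- but without it, your argument does not yet prove $(\mathrm{d}f)^-((u,v])\le f(u)$ or $(\mathrm{d}f)^+([u,v))\le f(v-)$, and hence does not reach (\ref{e:pos})--(\ref{e:tot}). Rather than inventing a new coupling, you should observe that once you know both $\nu^{(a,b)}((u,v])\le f(v)$ and $\mu^{(a,b)}((u,v])\le f(u-)$ hold, your own ``naive'' partition bound together with the $y\uparrow v$ limiting argument already finishes the proof along the lines you sketched.
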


\begin{remark}
One of the main results in \cite{samorodnitsky2013intrinsic} and \cite{shen2016random} was the so-called ``total variation constraint'', which states that the density $f$ of the distribution of a random location compatible with translation, for stationary or stationary increment processes, satisfies
$$
TV^+_{(u,v)}(f)\leq \min\{f(v), f(v-)\},
$$
$$
TV^-_{(u,v)}(f)\leq \min\{f(u), f(u-)\},
$$
and
$$
TV_{(u,v)}(f)\leq \min\{f(u), f(u-)\}+\min\{f(v), f(v-)\}.
$$
Now it becomes clear that they are special cases of Corollary \ref{cor:TV} where $\vf^t(x)=x+t$, hence consequences of the Noether theorem for random locations.
\end{remark}

The proof of Corollary \ref{cor:TV} mainly relies on the following proposition, which gives upper bounds for the mass that $\mu^{(a,b)}$ and $\nu^{(a,b)}$ can put on an interval. For simplicity, the proposition is presented using stationary intrinsic random locations. It is straightforward to extend all the definitions and results to general $\vf$-stationary intrinsic random locations if needed.

\begin{proposition}\label{prop:totalmass}
Let $L$ be a stationary intrinsic random location. Under the same setting as before, $\mu^{(a,b)}([u,v])\leq f(u-)$, $\nu^{(a,b)}([u,v])\leq f(v)$.
\end{proposition}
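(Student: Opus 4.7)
My plan has three parts: a reduction of the two claims to a single one, an explicit representation of $N_{\ve,[u,v]}$, and a packing argument in the spirit of Proposition~\ref{prop:bound}.

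\emph{Reduction.} First I would note that the two inequalities are equivalent. Taking $x_1\uparrow u$ in the identity of Theorem~\ref{thm:representation} and using the c\`adl\`ag property of $f$ to pass $f(x_1)\to f(u-)$ and $(x_1,x_2]\to[u,v]$ (so that the measures $\nu^{(a,b)}((x_1,x_2])$ and $\mu^{(a,b)}((x_1,x_2])$ tend to $\nu^{(a,b)}([u,v])$ and $\mu^{(a,b)}([u,v])$) yields
\[
f(v)-f(u-)=\nu^{(a,b)}([u,v])-\mu^{(a,b)}([u,v]).
\]
Since both $\mu^{(a,b)}$ and $\nu^{(a,b)}$ are nonnegative, $\mu^{(a,b)}([u,v])\leq f(u-)$ is equivalent to $\nu^{(a,b)}([u,v])\leq f(v)$, so I would focus on the latter.

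\emph{Representation of $N_{\ve,[u,v]}$.} On the underlying event, stability under restriction (Definition~\ref{def:irl}) applied to the enclosing interval $[a,b+\ve]$ forces $L([a,b+\ve])=L([a+\ve,b+\ve])\in(b,b+\ve]$: if instead $L([a,b+\ve])\in[a,b]$, stability would give $L([a,b])=L([a,b+\ve])$, and $L([a,b])\in[u,v]\subset[a+\ve,b+\ve]$ would then force $L([a+\ve,b+\ve])=L([a,b+\ve])\in[u,v]$, contradicting $L([a+\ve,b+\ve])\in(b,b+\ve]$. Conversely $L([a,b+\ve])\in[u,v]$ implies $L([a,b])=L([a+\ve,b+\ve])=L([a,b+\ve])$ by stability. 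Together these give $N_{\ve,[u,v]}=P(L([a,b])\in[u,v])-P(L([a,b+\ve])\in[u,v])$, which by Theorem~\ref{thm:relation} and the negligibility of coordinate planes under $\eta$ equals $\eta((-\infty,a]\times[u,v]\times[b,b+\ve])$. Proposition~\ref{prop:limit} then identifies $\nu^{(a,b)}([u,v])=\lim_{\ve\downarrow 0}\ve^{-1}N_{\ve,[u,v]}$.

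\emph{Packing estimate.} The goal is to show $\ve^{-1}N_{\ve,[u,v]}\leq f(v)+o(1)$. Mimicking the proof of Proposition~\ref{prop:bound}, I would introduce for $k=0,1,\dots,K-1$ the stationarity-shifted events
\[
\tilde E_k=\{L([a-k\ve,b+\ve-k\ve])\in(b-k\ve,b+\ve-k\ve],\ L([a-k\ve,b-k\ve])\in[u-k\ve,v-k\ve]\},
\]
each of probability $N_{\ve,[u,v]}$. Iterating stability shows that on $\tilde E_k$ one has $L([a,b])\in[u-k\ve,v-k\ve]$, while disjointness of the $\tilde E_k$'s follows because, for $j\neq k$, stability propagation through nested intervals forces $L$ on some common interval to lie in two incompatible sub-intervals. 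Refining the packing so that each $\tilde E_k$ also fixes $L([a,b])$ inside an $\ve$-window near $v-(k-1)\ve$, one sums $K$ such events and bounds the union by $P(L([a,b])\in[v-K\ve,v])=\int_{v-K\ve}^v f(s)\,ds$. Dividing by $K\ve$ and sending $\ve\downarrow 0$ then $K\ve\downarrow 0$, the ratio tends to $f(v-)$ by the c\`adl\`ag property of $f$, which together with the Noether identity (that absorbs any upward jump $f(v)-f(v-)=\nu^{(a,b)}(\{v\})-\mu^{(a,b)}(\{v\})\geq -\mu^{(a,b)}(\{v\})$) yields $\nu^{(a,b)}([u,v])\leq f(v)$.

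\emph{Main obstacle.} The delicate step is the refinement of the packing. The naive version (allowing $L([a,b])$ to roam over the full interval $[u-k\ve,v-k\ve]$) only yields a Proposition~\ref{prop:bound}-type bound like $\nu^{(a,b)}([u,v])\leq \frac{1}{u-a}\int_a^v f$, which is strictly weaker than $f(v)$. To obtain the sharp bound one needs to show that, on $\tilde E_k$, the conditional law of $L([a,b])$ is concentrated near the right endpoint $v-(k-1)\ve$ of the allowed interval, because the ``dominating'' point $L([a-k\ve,b+\ve-k\ve])\in(b-k\ve,b+\ve-k\ve]$ sits just past the right endpoint of $[a-k\ve,b-k\ve]$; making this concentration quantitative (so that restricting $L([a,b])$ to an $\ve$-window costs only an $o(1)$ factor in probability) is the main technical challenge.
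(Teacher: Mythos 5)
Your reduction (the two bounds are equivalent modulo Theorem \ref{thm:representation}) and your representation $N_{\ve,[u,v]}=P(L([a,b])\in[u,v])-P(L([a,b+\ve])\in[u,v])$ are both correct, and the latter is essentially the same intermediate quantity the paper works with (it bounds $M_{\ve,[u,v']}$ by $P(L([a+\ve,b])\in[u,v'])-P(L([a,b])\in[u,v'])$ and finishes on the $\mu$ side). The genuine gap is your third step. The packing argument is not a proof: the disjointness of the $\tilde E_k$ is asserted rather than shown, and the crucial ``refinement'' — that conditionally on $\tilde E_k$ the location $L([a,b])$ concentrates in an $\ve$-window near the right endpoint, at an $o(1)$ cost in probability — is exactly the statement you would need to prove and which you yourself flag as the unresolved ``main technical challenge.'' Without it you only get the weak $\frac{1}{u-a}\int f$ bound, so as written the argument does not establish $\nu^{(a,b)}([u,v])\leq f(v)$.

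What you are missing is that no packing is needed: your own step 2 already reduces everything to the right-endpoint analogue of inequality (\ref{e:boundbyf}) from Lemma \ref{lem:continuity}. Decomposing $1$ into the four events $\{L\in[a,u)\},\{L\in[u,v]\},\{L\in(v,\cdot]\},\{L=\infty\}$ for the intervals $[a,b]$ and $[a,b+\ve]$, Lemma \ref{lem:comparison} and consistency of existence give
$$
P(L([a,b])\in[u,v])-P(L([a,b+\ve])\in[u,v])\leq P(L([a,b+\ve])\in(v,b+\ve])-P(L([a,b])\in(v,b]),
$$
and stationarity plus Lemma \ref{lem:comparison} bound the right-hand side by $P(L([a,b+\ve])\in(v,v+\ve])\leq P(L([a,b])\in(v,v+\ve])$. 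Dividing by $\ve$ and using right-continuity of $f$ yields $\limsup_{\ve\downarrow 0}\ve^{-1}N_{\ve,[u,v]}\leq f(v)$, which is the bound you want. This is precisely the paper's proof, mirrored: it runs the same shift-and-compare estimate on $M_{\ve,[u,v']}$ to get $\limsup_\ve \ve^{-1}M_{\ve,[u,v']}\leq \lim_\ve\ve^{-1}P(L([a,b])\in[u-\ve,u))=f(u-)$, and obtains the $\nu$ bound by symmetry.
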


\begin{proof}
Take $v'\in(v,b)$, then
$$
\mu^{(a,b)}([u,v])\leq \mu^{(a,b)}([u,v'))=\lim_{\ve\to 0}\frac{1}{\ve}M_{\ve, [u,v']}\mu_{\ve, [u,v']}([u,v'))\leq\limsup_{\ve\to 0}\frac{1}{\ve}M_{\ve, [u,v']},
$$
since $\mu_{\ve, [u,v']}$ is a probability measure.

On the other hand, by definition, for $\epsilon<u-a$,
\begin{align*}
M_{\ve, [u,v']} & =P(L([a,b])\in[a,a+\ve), L[a+\ve, b+\ve]\in[u,v'])\\
& \leq P(L([a,b])\in[a,a+\ve), L[a+\ve, b]\in[u,v'])\\
& = P(L([a+\ve, b])\in [u,v'])-P(L([a,b])\in [u,v']).
\end{align*}
Moreover, by (\ref{e:boundbyf}) we have, for $\ve$ small enough,
$$
P(L([a+\ve, b])\in [u,v'])-P(L([a,b])\in [u,v'])\leq P(L([a,b])\in[u-\ve, u)).
$$
Hence
$$
\limsup_{\ve\to 0}\frac{1}{\ve}M_{\ve,[u,v']}\leq \lim_{\ve\to 0}\frac{1}{\ve}P(L([a,b])\in[u-\ve, u))=f(u-).
$$
The bound for $\nu^{(a,b)}([u,v])$ can be derived symmetrically.
\end{proof}

\begin{proof}[Proof of Corollary \ref{cor:TV}]
For simplicity we only prove the result for $\vf^t(x)=x+t$. The general case then follows by the change of variable discussed in Remark \ref{rem:transform}.

In this case, by Theorem \ref{thm:representation}, we have
$$
f(x_2)-f(x_1)= \nu^{(a,b)}((x_1, x_2])-\mu^{(a,b)}((x_1, x_2])
$$
for any $x_1, x_2\in [u,v], x_1<x_2$.

Hence
$$
(f(x_2)-f(x_1))^+\leq \nu^{(a,b)}((x_1, x_2]).
$$

Therefore, for any partition $u< x_1<\cdots <x_n< v$ of $(u,v)$,
$$
\sum_{i=1}^{n-1}(f(x_{i+1})-f(x_i))^+\leq\nu^{(a,b)}((u, v])\leq f(v)
$$
by Proposition \ref{prop:totalmass}. Taking supremum over all partitions of $(u,v)$ on the left hand side leads to
$$
TV^+_{(u,v)}(f)\leq f(v).
$$
Moreover, since $f$ is c\`{a}dl\`{a}g, we also have
$$
TV^+_{(u,v)}(f)=\lim_{y\uparrow v}TV^+_{(u,y)}(f)\leq \lim_{y\uparrow v}f(y)=f(v-),
$$
hence
$$
TV^+_{(u,v)}(f)\leq \min\{f(v-), f(v)\}.
$$
The result for $TV^-_{(u,v)}(f)$ can be proved symmetrically. Finally, adding the two inequalities (\ref{e:pos}) and (\ref{e:neg}) gives (\ref{e:tot}).
\end{proof}

\section{Boundary and near-boundary behavior}

In Section 3, we mainly focus on the behavior of the distribution of a $\vf$-stationary intrinsic random location $L$ in the interior of the interval of interest $I=[a,b]$. We have seen that a c\`{a}dl\`{a}g density, denoted by $f$, exists on $(a,b)$. Indeed, (\ref{abso}) gives an upper bound for $f(x), x\in(a,b)$. Such a bound, however, diverges as $x$ approaches $a$ or $b$. Moreover, there may also be point masses on the two boundaries of the interval, which were not studied in Section 3. In this section we provide these missing pieces by discussing the boundary and near-boundary behavior of $L$.

For simplicity, in this section we always assume that $L$ is a stationary intrinsic random location. The results can be easily generalized to the case where $L$ is $\vf$-stationary.

Recall that $S=\{x\in\mathbb R: x=L(I) \text{ for some } I\in\mathcal I\}$, $l_x=\sup\{y\in S: y<x, x\preceq y\}$ and $r_x=\inf\{y\in S: y>x, x\preceq y\}$, where ``$\preceq$'' is the partial order determined by $L$. For any $T>0$, define $S_{l,T}:=\{x\in S: l_x=x, r_x\geq x+T\}$ and $S_{r,T}:=\{x\in S: r_x=x, l_x\leq x-T\}$. Denote by $Leb(\cdot)$ the Lebesgue measure on $\mathbb R$. Then we have
\begin{proposition}\label{prop:boundary}
For $I=[a,b]$,
\begin{equation}\label{e:boundaryl}
P(L(I)=a)=P(a\in S_{l,b-a})=E(Leb(S_{l,b-a}\cap [0,1))),
\end{equation}
\begin{equation}\label{e:boundaryr}
P(L(I)=b)=P(b\in S_{r,b-a})=E(Leb(S_{r,b-a}\cap[0,1))).
\end{equation}
\end{proposition}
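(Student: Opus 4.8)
The plan is to prove the two displayed identities separately and to deduce \eqref{e:boundaryr} from \eqref{e:boundaryl} by reflecting $\mathbb R$ about a point (if $L$ is a stationary intrinsic random location so is $\tilde L(I):=-L(-I)$, under this reflection $S_{l,T}$ and $S_{r,T}$ are exchanged, and $\{L(I)=a\}$ becomes an event of the form $\{\tilde L(I')=b'\}$). Throughout write $T=b-a$.

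For the second equality $P(a\in S_{l,T})=E(\mathrm{Leb}(S_{l,T}\cap[0,1)))$ I would argue by Tonelli and stationarity. First one checks joint measurability of $\{(x,\omega):x\in S_{l,T}(\omega)\}$: by stability under restriction, the partial order $\preceq$ --- hence the random set $S$ and the maps $x\mapsto l_x$, $x\mapsto r_x$ --- is determined by the countable subfamily $\{L([p,q]):p<q,\ p,q\in\mathbb Q\}$ through countable suprema and infima, so each of $\{x\in S\}$, $\{l_x=x\}$, $\{r_x\ge x+T\}$ depends measurably on $(x,\omega)$. Granting this, $E(\mathrm{Leb}(S_{l,T}\cap[0,1)))=\int_0^1 P(x\in S_{l,T})\,dx$, and since $L$ is stationary the translate $x\mapsto x+c$ carries $S$, $l_\cdot$, $r_\cdot$ (hence $S_{l,T}$) covariantly onto a copy with the same law, so $x\mapsto P(x\in S_{l,T})$ is constant and the integral equals $P(a\in S_{l,T})$ (this also makes the right-hand side independent of the location of $[a,b]$, consistent with stationarity of $L$).

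The substantive part is the first equality $P(L(I)=a)=P(a\in S_{l,T})$, which I would prove by showing $\{L([a,b])=a\}$ and $\{a\in S_{l,T}\}$ coincide up to a $\mathbb P$-null set. On $\{L([a,b])=a\}$, $a\in S$ trivially, and $r_a\ge b$ at once: if $y\in(a,b]$ had $a\preceq y$, then $L([a,y])=y$ (the equivalence $a\preceq y\Leftrightarrow L([a,y])=y$ from the proof of Lemma \ref{lem:partialorder}), contradicting $L([a,y])=L([a,b])=a$ by stability under restriction. It remains to match, modulo null sets, $\{L([a,b])=a\}$ with $\{a\in S\}\cap\{l_a=a\}\cap\{r_a\ge b\}$. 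The key input is Theorem \ref{thm:relation}: the coordinate planes $\{z_2=\mathrm{const}\}$ are $\eta$-null, so almost surely $\xi$ has no atom with middle coordinate $a$, i.e.\ $a$ is a.s.\ not an interior location of any interval. Off that null event: if $L([a,b])=a$, then stability under restriction and consistency of existence give $L([y,b])\in[y,a)$ for every $y<a$ (the value $a$ is excluded because it would make $a$ interior to $[y,b]$), so $L([y,b])\to a$ as $y\uparrow a$; should $l_a<a$, then on a whole interval $(a-\delta_0,a)$ the locations $L([y,b])$ are interior points $c\in(y,a)$ with $l_c\le y$ and $r_c\ge b$, whose level sets tile $(a-\delta_0,a)$ by countably many subintervals with right endpoints $c_k\uparrow a$ and (by the same bookkeeping) $l_{c_k}\le a-\delta_0$, forcing infinitely many atoms of $\xi$ into the single box $(-\infty,a-\delta_0)\times(a-\delta_0,b')\times(b',\infty)$ for any $b'\in(a,b)$, contradicting the ``at most one atom per box'' property --- hence $l_a=a$. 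Conversely, on $\{a\in S_{l,T}\}$ and off the same null event, $a\in S$ with $a$ non-interior makes $a$ an endpoint location, and combining $l_a=a$ (which supplies $z_n\uparrow a$ that are themselves left-endpoint locations reaching past $a$) with $r_a\ge b$ (which forbids $L([a,w])=w$ for $w\in(a,b)$) and stability under restriction yields $L([a,d])=a$ for all $d<b$, hence $L([a,b])=a$ after discarding the extra null event $\{g(a)=b\}$, where $g(a):=\sup\{d>a:L([a,d])=a\}$, null by the endpoint-continuity of Lemma \ref{lem:continuity}.

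\textbf{Main obstacle.} Unlike the interior analysis of Section 3, there is no clean pointwise equivalence here: $L([a,b])=a$ can occur while $a$ is simultaneously an interior or right-endpoint location of some other interval, which forces $l_a<a$. So the two events can only be identified modulo null sets, and the delicate point --- where essentially all the work lies --- is verifying that the exceptional configurations have probability zero. The tools for this are exactly the point-process features of Section 3 (the $\sigma$-finiteness of $\xi$, the one-atom-per-box property, and the $\eta$-nullity of coordinate planes from Theorem \ref{thm:relation}) together with the endpoint-continuity of Lemma \ref{lem:continuity}; everything else uses only stability under restriction and consistency of existence from Definition \ref{def:irl}.
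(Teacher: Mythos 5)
Your overall strategy---prove the two equalities separately, get \eqref{e:boundaryr} from \eqref{e:boundaryl} by reflection, handle the second equality via stationarity and Tonelli, and handle the first by identifying $\{L(I)=a\}$ and $\{a\in S_{l,T}\}$ modulo a $\p$-null event---is sound in outline and uses the right ingredients: the $\eta$-nullity of the plane $\{z_2=\mathrm{const}\}$ from Theorem~\ref{thm:relation}, plus a continuity input near the right endpoint. The paper organizes the first equality differently: rather than identifying events a.s., it notes the containment
\[
\{a\in S,\ l_a=a,\ r_a>b\}\subseteq\{L([a,b])=a\}\subseteq\{a\in S,\ l_a\leq a,\ r_a\geq b\},
\]
discards $\{l_a<a,\ r_a\geq b\}$ via the $z_2$-plane nullity, then sandwiches $P(L([a,b])=a)$ between $P(L([a,b+\epsilon])=a)$ and $P(L([a,b-\epsilon])=a)$ with $P(a\in S_{l,b-a})$ in between, and closes the sandwich by a short separate argument (mimicking Lemma~\ref{lem:continuity} and using Lemma~\ref{lem:comparison} and the interior density) that $b\mapsto P(L([a,b])=a)$ is continuous. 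This avoids having to discuss $\{r_a=b\}$ as an event at all.

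Two substantive issues with your execution. First, the cascade argument for $l_a=a$ a.s.\ on $\{L([a,b])=a\}$ is both unnecessary and not clearly correct. It is unnecessary because on $\{L([a,b])=a,\ l_a<a\}$ the triple $(l_a,a,r_a)$ is itself an atom of $\xi$ with second coordinate exactly $a$ (since $l_a<a<b\leq r_a$), so the $z_2$-plane nullity kills this event in one line. It is not clearly correct because you assert that $c=L([y,b])$ lies in the open interval $(y,a)$ and is an interior location; but $c=y$ is entirely possible, in which case $c$ is an endpoint and contributes no atom, and the claim that the distinct values $c_k$ produce infinitely many atoms in a single box of the form $(-\infty,a')\times(a',b')\times(b',\infty)$ is not justified. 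Second---and this is where the real work lies---the residual gap $\{a\in S,\ l_a=a,\ r_a=b\}$ cannot be discarded by plane nullity, since $(a,a,b)$ is not even a point of $E$. Your dismissal of it as ``null by the endpoint-continuity of Lemma~\ref{lem:continuity}'' via $g(a):=\sup\{d>a:L([a,d])=a\}$ is only a sketch; what is actually needed is the continuity of $P(L([a,b])=a)$ in $b$, which the paper proves directly. If you make that continuity step explicit (it does not follow verbatim from Lemma~\ref{lem:continuity}, which concerns probabilities of compact subintervals strictly interior to $[a,b]$), your approach closes and is equivalent to the paper's in substance, only phrased in the language of events rather than probabilities.
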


\begin{proof}
By symmetry, it suffices to prove \ref{e:boundaryl}. Note that for $x\in S$, $l_x=x, r_x>x+b-a$ implies that $L([x,x+b-a])=x$, which in turn implies that $l_x\leq x, r_x\geq x+b-a$. Hence we have
$$
P(a\in S, l_a=a, r_a>b)\leq P(L([a,b])=a)\leq P(a\in S, l_a\leq a, r_a\geq b).
$$
However,
$$
P(a\in S, l_a<a, r_a\geq b)=\eta((-\infty, a)\times\{a\}\times [b,\infty))=0,
$$
since the plane with the second coordinate fixed is a $\eta-$null set, according to Theorem \ref{thm:relation}. Therefore for $\epsilon>0$,
\begin{multline}\label{e:boundaryine}
P(L([a,b+\epsilon])=a)\leq P(a\in S, l_a=a, r_a>b)\\
\leq P(L([a,b])=a)\leq P(a\in S, l_a=a, r_a\geq b)\leq P(L([a,b-\epsilon])=a).
\end{multline}
Next, by a similar reasoning as in the proof of Lemma \ref{lem:continuity}, $P(L([a,b])=a)$ is continuous in $b$ for $b>a$. Indeed, for $b'>b$,
\begin{align*}
& P(L([a,b])=a)-P(L([a,b'])=a)\\
= & P(L([a,b'])\in[b,b'))-[P(L([a,b]\in(a,b)))-P(L([a,b'])\in(a,b))]\\
 & -[P(L([a,b])=b)-P(L([a,b'])=b')]-[P(L([a,b])=\infty)-P(L([a,b'])=\infty)]\\
\leq & P(L([a,b'])\in[b,b'))\leq P(L([a+b'-b,b'])\in[b,b'))=P(L([a,b])\in[2b-b',b))\to 0
\end{align*}
as $b'\downarrow b$, where the inequalities follow from Lemma \ref{lem:comparison}, and the convergence is due to the existence of a density of $L([a,b])$ on $(a,b)$ given by Theorem \ref{thm:representation}.

 Thus, we have $P(L(I)=a)=P(a\in S_{l,b-a})$ by taking $\epsilon\to 0$ in (\ref{e:boundaryine}) and applying the continuity result proved above. The second equality in (\ref{e:boundaryl}) then follows naturally by the observation that $P(x\in S_{l,b-a})$ is constant in $x$, due to the equality $P(L(I)=a)=P(a\in S_{l,b-a})$ and the fact that $L$ is a stationary intrinsic random location.
\end{proof}

We now turn to the near-boundary behavior of the distribution of $L(I)$, $I=[a,b]$. More precisely, we would like to know when the density $f(x)$ will explode as $x$ approaches the boundaries of the interval $I$. Clearly, by Theorem \ref{thm:representation} and Proposition \ref{prop:totalmass}, $\lim_{x\downarrow a}f(x)=\infty$ if and only if $\mu^{(a,b)}((a,x_0])=\infty$ for some (equivalently, any) $x_0\in (a,b)$. By (\ref{e:mu}), this means
\begin{equation}\label{e:leftexplode}
\eta(\{(z_1, z_2, z_3): z_1\in[a,a+1), z_2\in (z_1, z_1+x_0-a], z_3\in (z_1+b-a,\infty)\})=\infty.
\end{equation}
Similarly, $\lim_{x\uparrow b}f(x)=\infty$ if any only if
\begin{equation}\label{e:rightexplode}
\eta(\{(z_1, z_2, z_3): z_1\in(-\infty,z_3+a-b), z_2\in (z_3+x_0-b, z_3), z_3\in (b,b+1]\})=\infty.
\end{equation}

Define set
$$
S_1:=\{x\in[0,1): l_x<x, r_x>x, r_x-l_x>b-a\},
$$
then (\ref{e:leftexplode}) or (\ref{e:rightexplode}) would require $E(|S_1|)=\infty$, where $|S_1|$ is the cardinal number of $S_1$, with the convention that $|\cdot|=\infty$ for any infinite set. Indeed, by Corollary \ref{cor:invariance} and taking $x_0=\min\{a+\frac{1}{2}, b\}$, (\ref{e:leftexplode}) holds if and only if
\begin{equation}\label{e:infinity}
2\eta(\{(z_1, z_2, z_3): z_1\in\left[a,a+1/2\right), z_2\in \left(z_1, z_1+\min\{1/2, b-a\}\right], z_3\in (z_1+b-a,\infty)\})=\infty.
\end{equation}
Since the set $\{(z_1, z_2, z_3): z_1\in\left[a,a+1/2\right), z_2\in \left(z_1, z_1+\min\{1/2, b-a\}\right], z_3\in (z_1+b-a,\infty)\}$ is a subset of $\{(z_1, z_2, z_3): z_1<z_2, z_2\in[a,a+1), z_3>z_2, z_3-z_1>b-a\}$, (\ref{e:infinity}) implies that the latter set must also have measure $\infty$ under $\eta$. Then
$$
\eta(\{(z_1, z_2, z_3): z_1<z_2, z_2\in[0,1), z_3>z_2, z_3-z_1>b-a\})=E(|S_1|)=\infty
$$
by Corollary \ref{cor:invariance}.

Although not a necessary condition, one direct and simple way leading to $E(|S_1|)=\infty$ is, of course, to have $S_1$ to be an infinite set with positive probability. The next proposition gives a necessary and sufficient condition for $S_1$ to be infinite.

\begin{proposition}\label{prop:explode}
The set $S_1$ has infinite number of elements if and only if at least one of the following four scenarios is true:
\begin{enumerate}[(1)]
\item There exists an increasing sequence $\{x_n\}_{n=1,2,...}$ in $S\cap [0,1)$, such that for each $n$, $x_{n+1}\preceq x_n$, $l_{x_n}<x_n$, and $r_{x_n}\geq x_n+b-a$;
\item There exists an decreasing sequence $\{x_n\}_{n=1,2,...}$ in $S\cap [0,1)$, such that for each $n$, $x_{n}\preceq x_{n+1}$, $l_{x_n}<x_n$, and $r_{x_n}\geq x_n+b-a$;
\item There exists an decreasing sequence $\{x_n\}_{n=1,2,...}$ in $S\cap [0,1)$, such that for each $n$, $x_{n+1}\preceq x_n$, $r_{x_n}>x_n$, and $l_{x_n}\leq x_n-b+a$;
\item There exists an increasing sequence $\{x_n\}_{n=1,2,...}$ in $S\cap [0,1)$, such that for each $n$, $x_{n}\preceq x_{n+1}$, $r_{x_n}>x_n$, and $l_{x_n}\leq x_n-b+a$.
\end{enumerate}
\end{proposition}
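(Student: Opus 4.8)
The plan is to prove the two implications separately. The ``if'' direction is a direct check: if (1) holds with witnessing strictly increasing sequence $\{x_n\}\subset S\cap[0,1)$, then for each $n$, $l_{x_n}<x_n$ and $r_{x_n}\ge x_n+b-a>x_n$, so $r_{x_n}-l_{x_n}=(r_{x_n}-x_n)+(x_n-l_{x_n})>b-a$, i.e.\ $x_n\in S_1$; hence $S_1$ is infinite. Cases (2)--(4) are identical, using $l_{x_n}\le x_n-b+a<x_n$ together with $r_{x_n}>x_n$ in (3)--(4) to read off $r_{x_n}-l_{x_n}>b-a$. So the content is the ``only if'' direction.

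Assume $S_1$ infinite. As an infinite bounded subset of $[0,1)$ it has an accumulation point $x^*\in[0,1]$ and contains a strictly monotone sequence $(y_n)\subseteq S_1$ with $y_n\to x^*$; applying the reflection $z\mapsto-z$ to $L$ if necessary, I assume $y_n\downarrow x^*$ (this case will yield scenario (2) or (3); the reflected case $y_n\uparrow x^*$ yields (4) or (1)). The main tool is the observation, already implicit in the proofs of Lemma~\ref{lem:partialorder} and Theorem~\ref{thm:relation}, that if $[c,d]\in\mathcal I$ and $L([c,d])\ne\infty$ then $L([c,d])$ is the $\preceq$-maximum of $S\cap[c,d]$: for $w\in S\cap[c,d]$, stability under restriction applied to $[w\wedge L([c,d]),w\vee L([c,d])]\subseteq[c,d]$ gives $w\preceq L([c,d])$. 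Fix a real $b'$ with $b'-x^*>b-a$ and $b'\ge 1$, say $b'=\max\{1,x^*+2(b-a)\}$. For large $n$ one has $L([y_n,b'])\ne\infty$: choosing, via $y_n\in S_1$, an interval $[c,d]$ with $L([c,d])=y_n$ and $c<y_n<d<b'$, consistency of existence gives $L([c,b'])\ne\infty$, and $L([c,b'])\notin[c,y_n)$ (otherwise stability forces $L([c,d])=L([c,b'])\ne y_n$), so $L([c,b'])\in[y_n,b']$ and $L([y_n,b'])=L([c,b'])$. Put $Q_N:=L([y_N,b'])$, the $\preceq$-maximum of $S\cap[y_N,b']$. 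Since $(y_N)$ decreases, $(Q_N)$ is a $\preceq$-chain; since nothing in $S\cap(Q_N,b']$ dominates $Q_N$ we get $r_{Q_N}\ge b'$, and since no point of $[y_N,Q_N)$ dominates $Q_N$ we get $l_{Q_N}<Q_N$. If $(Q_N)$ is not eventually constant, then at each index where $Q_{N+1}\ne Q_N$ one has $Q_{N+1}\in[y_{N+1},y_N)$, so along these indices $(N_j)$ the values $Q_{N_j}$ strictly decrease and, being squeezed between $y_{N_j}$ and $y_{N_j-1}$, satisfy $Q_{N_j}\to x^*$; hence eventually $Q_{N_j}\le x^*+(b-a)=b'-(b-a)$, giving $r_{Q_{N_j}}\ge b'\ge Q_{N_j}+(b-a)$. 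With $l_{Q_{N_j}}<Q_{N_j}$, $Q_{N_j}\in S\cap[0,1)$ and $Q_{N_j}\preceq Q_{N_{j+1}}$ (transitivity along the chain), this is exactly scenario (2).

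The remaining case is that $(Q_N)$ is eventually a single point $Q^*$, necessarily a $\preceq$-maximum of $S\cap(x^*,b']$, and this is the delicate part. Now each $y_n$ (large $n$) has $y_n\preceq Q^*$, hence $r_{y_n}\le Q^*$, so $r_{y_n}-l_{y_n}>b-a$ pushes the surplus room to the left: $y_n-l_{y_n}>(b-a)-(r_{y_n}-y_n)$. To turn this near-miss into a sharp bound $l\le\,\cdot\,-(b-a)$ (or its right-hand analogue) I would use an ``extend the interval by $b-a$'' construction: set $p_n:=L([y_n-(b-a),\,y_n+\delta_n])$ with $\delta_n>0$ small enough that $y_n+\delta_n<r_{y_n}$; then either $l_{y_n}\le y_n-(b-a)$ already, or $p_n\ne y_n$, $y_n\preceq p_n$, $l_{p_n}\le y_n-(b-a)$ and $r_{p_n}\ge y_n+\delta_n$, and a case analysis on the position of $p_n$ relative to $y_n$ and to the endpoints of the defining interval shows that $p_n$, or some point obtained by iterating the construction, has room $\ge b-a$ on one of its sides. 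Collecting these points over large $n$ and extracting a strictly monotone $\preceq$-chain (by Ramsey's theorem, colouring pairs $\{y_m,y_n\}$ by which of $y_m\preceq y_n$, $y_n\preceq y_m$, or neither holds) then produces scenario (2) or (3). I expect this last case to be the main obstacle: passing from $r_y-l_y>b-a$ to a sharp one-sided estimate while simultaneously controlling the $\preceq$-relations between consecutive terms. In the non-degenerate case it is free because $b'$ is fixed a full $b-a$ past $x^*$; in the ``dominant point'' case one must manufacture the missing room on the opposite side, show the iterated construction terminates at a usable point (or itself yields one of the four sequences), and dispose of the edge cases where an extremal value lands on an interval endpoint. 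Throughout, only stability under restriction, consistency of existence, and the inequalities $l_x\le c$, $r_x\ge d$ valid whenever $x=L([c,d])\in(c,d)$ are used.
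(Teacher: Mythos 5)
There is a genuine gap, and you flag it yourself: the case where $(Q_N)$ is eventually constant at a single dominating point $Q^*$ is not resolved. You describe an ``extend the interval by $b-a$'' construction followed by an iteration and a Ramsey extraction, but none of this is carried out, and it is exactly the hard part. When $Q^*$ dominates all $y_n$, the bound $r_{y_n}\le Q^*$ combined with $r_{y_n}-l_{y_n}>b-a$ only gives $l_{y_n}<Q^*-(b-a)$, which has no connection to $y_n-(b-a)$ unless $Q^*$ happens to sit near $x^*$; the surplus room has been pushed to a point you do not control. The non-constant case of your argument is correct (with a small imprecision: $l_{Q_N}<Q_N$ needs the observation that when $Q_N=y_N$ this already follows from $y_N\in S_1$), but the proposal as written proves the result only under the assumption that $(Q_N)$ is not eventually constant.

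The paper avoids this dichotomy entirely by never introducing $Q_N$. Instead, it works directly with a monotone sequence $\{x_n\}\subset S_1$ accumulating at $x_\infty$ with $x_\infty-x_1<b-a$, and makes a combinatorial observation that is the real key: if $x_j$ and $x_{j+1}$ are $\preceq$-incomparable, then $y_j:=L([x_j,x_{j+1}])$ lies strictly between them and satisfies both $l_{y_j}\le l_{x_j}<y_j-(b-a)$ and $r_{y_j}>y_j+(b-a)$ (each side inherits slack $>b-a$ because $r_{x_j}\le y_j\le l_{x_{j+1}}$ while $r_{x_j}-l_{x_j}>b-a$ and $r_{x_{j+1}}-l_{x_{j+1}}>b-a$). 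Two such $y$'s must therefore be at least $b-a$ apart, so only finitely many incomparable consecutive pairs exist, and a tail of $\{x_n\}$ is $\preceq$-comparable term by term. Since $x_\infty-x_1<b-a$ rules out having both $x_n\preceq x_{n-1}$ and $x_n\preceq x_{n+1}$, a further tail is $\preceq$-monotone, and the two monotone directions are handled by short monotonicity-of-$l$ and $r$ arguments to land in scenario (1) or (4) (equivalently (2) or (3) after the reflection you use). This is the idea your proposal is missing; once you have it, the $Q_N$ construction and the unresolved ``dominant point'' case become unnecessary.
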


\begin{proof}
The ``if'' part is trivial. For the ``only if'' part, assume $|S_1|=\infty$. Then there exists a monotone sequence of points in $S_1$. Without loss of generality, assume the sequence is increasing, and denote it by $\{x_n\}_{n=1,2,...}$, with $\lim_{n\to\infty}x_n=x_{\infty}$, which is not necessarily in $S_1$. Moreover, $x_1$ can be chosen so that $x_\infty-x_1<b-a$.

Next, the sequence can be taken such that for any $n=1,2,...$, either $x_n\preceq x_{n+1}$ or $x_n\succeq x_{n+1}$, which is not trivial since ``$\preceq$'' is only a partial order. To see this, consider the set of indices $J=\{j: x_j \npreceq x_{j+1}, x_{j+1}\nsucceq x_{j}\}$. For any $n\in J$, let $y_n=L([x_n, x_{n+1}])$, then $y_n\in (x_n,x_{n+1})$. As such, we have $r_{x_n}\leq y_n$. By the definition of $S_1$, this implies that $l_{y_n}\leq l_{x_n}<r_{x_n}-(b-a)\leq y_n-(b-a)$. Symmetrically, $r_{y_n}>y_n+(b-a)$. This means, for any $n_1, n_2\in J$, $|y_{n_1}-y_{n_2}|\geq b-a$, which guarantees that $J$ is a finite set. Taking the subsequence of $\{x_n\}$ starting from $n_0=\max\{j: j\in J\}+1$ gives a new sequence for which either $x_n\preceq x_{n+1}$ or $x_n\succeq x_{n+1}$.

For such a sequence, it is clear that for any $n\geq 2$, $x_n\preceq x_{n-1}$ and $x_n\preceq x_{n+1}$ can not hold at the same time, since otherwise $r_{x_n}-l_{x_n}\leq x_{n+1}-x_{n-1}<b-a$, implying that $x_n$ can not be in $S_1$. Thus, either $\{x_n\}_{n=1,2,...}$ is monotone according to $\preceq$, or there exists $n_0$, such that $x_1\preceq x_2\preceq \cdots \preceq x_{n_0}$ and $x_{n_0}\succeq x_{n_0+1}\succeq \cdots$. As a result, there always exists a subsequence of $\{x_n\}_{n=1,2,...}$, still denoted as $\{x_n\}_{n=1,2,...}$ by a slight abuse of notation, which is monotone according to $\preceq$. Next we discuss the two possible cases.

Case 1: $x_{n+1}\preceq x_n$ for any $n$. In this case note that $l_{x_n}\in[x_{n-1}, x_n)$, hence $\lim_{n\to\infty}l_{x_n}=x_\infty$. Moreover, since $x_n$ is decreasing in $n$ according to $\preceq$ and $r_{x_n}>l_{x_n}+b-a\geq x_{n-1}+b-a >x_\infty$ for any $n\geq 2$, $r_{x_n}$ is non-increasing in $n$ for $n\geq 2$. Therefore,
$$
r_{x_n}\geq \lim_{n\to\infty}r_{x_n}\geq \lim_{n\to\infty} l_{x_n}+b-a = x_\infty+b-a>x_n+b-a
$$
for $n=2,...$. Thus, scenario (1) in the proposition holds for $\{x_n\}_{n=2,3,...}$.

Case 2: $x_{n}\preceq x_{n+1}$ for any $n$. Then $r_{x_1}<x_\infty$, hence $l_{x_1}<x_\infty-b+a$. By a similar reasoning as in case 1, $l_{x_n}$ is non-increasing in $n$, so $l_{x_n}\leq l_{x_1}$. Recall that $x_n$ is increasing and $\lim_{n\to\infty}x_n=x_{\infty}$, therefore, there exists $n_0$, such that $x_\infty-x_{n}<x_\infty-b+a-l_{x_1}$ for any $n>n_0$, which implies $l_{x_n}\leq l_{x_1}<x_n-b+a$ for $n\geq n_0$. Taking the subsequence of $\{x_n\}_{n=1,2,...}$ starting from $x_{n_0}$ leads to scenario (4).

Scenarios (2) and (3) can be derived symmetrically by assuming that the sequence $\{x_n\}_{n=1,2,...}$ is decreasing.
\end{proof}

With Proposition \ref{prop:explode} proved, it is obvious that scenarios (1) and (2) corresponds to the explosion of the density $f$ near the boundary $a$, while scenarios (3) and (4) corresponds to the explosion of $f$ near the boundary $b$.

\begin{corollary}\label{cor:densityexplode}
Under the same setting as in Proposition \ref{prop:explode}, if (1) or (2) happens with positive probability, then $\lim_{x\downarrow a}f(x)=\infty$; if (3) or (4) happens with positive probability, then $\lim_{x\uparrow b} f(x)=\infty$.
\end{corollary}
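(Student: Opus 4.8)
The plan is to deduce Corollary \ref{cor:densityexplode} directly from the discussion immediately preceding Proposition \ref{prop:explode}, which already reduces the explosion of $f$ at the left boundary to the condition $E(|S_1|)=\infty$ (via (\ref{e:leftexplode}) and (\ref{e:infinity})), combined with the observation that Proposition \ref{prop:explode} gives sufficient conditions for $|S_1|=\infty$. So the first step is simply to recall that, by Theorem \ref{thm:representation} and Proposition \ref{prop:totalmass}, $\lim_{x\downarrow a}f(x)=\infty$ is equivalent to $\mu^{(a,b)}((a,x_0])=\infty$, which by (\ref{e:mu}) is equivalent to (\ref{e:leftexplode}); and the text between (\ref{e:rightexplode}) and Proposition \ref{prop:explode} has shown that (\ref{e:leftexplode}) holds whenever $E(|S_1|)=\infty$ with the relevant orientation, through the containment of sets and Corollary \ref{cor:invariance}.

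Next I would observe that if scenario (1) or (2) of Proposition \ref{prop:explode} occurs on an event of positive probability, then on that event $S_1$ is an infinite set, so $E(|S_1|)=\infty$; moreover the points $x_n$ in scenarios (1) and (2) satisfy $l_{x_n}<x_n$ and $r_{x_n}\ge x_n+b-a$, i.e. they are exactly the points contributing to the left-boundary integral (\ref{e:leftexplode}) rather than the right-boundary integral (\ref{e:rightexplode}). One should be a little careful here: the argument in the text deriving (\ref{e:leftexplode}) from $E(|S_1|)=\infty$ was phrased as ``(\ref{e:leftexplode}) or (\ref{e:rightexplode}) would require $E(|S_1|)=\infty$'', i.e. in the converse direction, so the honest route is to go back through the chain of set inclusions with the orientation fixed. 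Concretely, the $x_n$ of scenario (1) or (2), translated into $[0,1)$ by stationarity and Corollary \ref{cor:invariance}, give infinitely many points $\epsilon_{x_n}=(l_{x_n},x_n,r_{x_n})$ lying in $\{(z_1,z_2,z_3):z_1<z_2,\ z_2\in[a,a+1),\ z_3>z_2,\ z_3-z_1>b-a\}$, and in fact (after passing to the tail of the sequence, as done in the proof of Proposition \ref{prop:explode}) in the smaller set appearing in (\ref{e:infinity}); hence that set has infinite $\eta$-measure, which is (\ref{e:leftexplode}), whence $\mu^{(a,b)}((a,x_0])=\infty$ and $\lim_{x\downarrow a}f(x)=\infty$.

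The symmetric statement for scenarios (3) and (4) and the right boundary follows by the same argument with the roles of $l_x$ and $r_x$, of $a$ and $b$, and of $\mu^{(a,b)}$ and $\nu^{(a,b)}$ interchanged, using (\ref{e:rightexplode}), (\ref{e:nu}), and the bound $\nu^{(a,b)}((a,x_0])\le$ (limit of $f$ at $b$) from Proposition \ref{prop:totalmass}. I expect the only real subtlety — the ``main obstacle'' — to be bookkeeping: making sure that a positive-probability occurrence of scenario (1) or (2) really forces the \emph{left}-boundary integral (\ref{e:leftexplode}) to diverge and not merely the sum (\ref{e:leftexplode})$+$(\ref{e:rightexplode}), which amounts to checking that the coordinates of the $\epsilon_{x_n}$ land in the correct one of the two regions; this is exactly the content of the conditions $l_{x_n}<x_n$, $r_{x_n}\ge x_n+b-a$ in scenarios (1),(2) versus $r_{x_n}>x_n$, $l_{x_n}\le x_n-b+a$ in (3),(4), so once the orientation is tracked the argument is routine. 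Everything else is a direct invocation of Theorem \ref{thm:representation}, Proposition \ref{prop:totalmass}, Corollary \ref{cor:invariance}, and the computation already carried out in the paragraphs preceding Proposition \ref{prop:explode}.
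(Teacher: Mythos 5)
Your overall route is the same as the paper's: translate the occurrence of scenario (1) or (2) into infinitely many points $\epsilon_{x_n}$ landing in a region whose $\eta$-measure then forces $(\ref{e:leftexplode})$, and invoke the identification $\mu^{(a,b)}((a,x_0])=\infty$ $\Leftrightarrow$ $\lim_{x\downarrow a}f(x)=\infty$ already set up before Proposition~\ref{prop:explode}. However, there is a genuine gap at the step you flag as ``after passing to the tail of the sequence, as done in the proof of Proposition~\ref{prop:explode}.'' That citation does not do the work you need. The tail-passing in Proposition~\ref{prop:explode} serves to extract a $\preceq$-monotone sequence and to arrange $x_\infty-x_1<b-a$; it says nothing about the size of $x_n-l_{x_n}$. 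But to place $\epsilon_{x_n}$ in the set of $(\ref{e:infinity})$ (or, what one really needs, in $\{(z_1,z_2,z_3):z_1\in[z_2-\Delta,z_2),\ z_2\in[0,1),\ z_3>z_1+b-a\}$ for \emph{every} $\Delta>0$), you must know that $x_n-l_{x_n}\to 0$; without that, the points might contribute only to $\mu^{(a,b)}$ away from $a$, which is bounded by Proposition~\ref{prop:totalmass}, and nothing would be concluded about the limit of $f$ at $a$.

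The missing observation is short but essential: in scenario (1), $x_{n+1}\preceq x_n$ with $x_{n-1}<x_n$ gives $l_{x_n}\geq x_{n-1}$ (since $x_{n-1}$ is a point below $x_n$ with higher $\preceq$-order), and hence $0<x_n-l_{x_n}\leq x_n-x_{n-1}\to 0$ because $\{x_n\}$ is an increasing bounded sequence; the analogous inequality holds in scenario (2) with $l_{x_n}\geq x_{n+1}$. This is precisely what the paper records before deducing that the $\Delta$-thin slab has infinite $\eta$-measure for every $\Delta>0$, after which the set inclusion and Corollary~\ref{cor:invariance} deliver $(\ref{e:leftexplode})$. If you insert that one line, the rest of your outline (including the symmetric treatment of scenarios (3) and (4) via $\nu^{(a,b)}$ and $(\ref{e:rightexplode})$) matches the paper's argument.
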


\begin{proof}
We prove that scenario (1) implies $\lim_{x\downarrow a}f(x)=\infty$. The other cases are similar.

In scenario (1), for any $n\geq 2$, $0\leq x_{n-1}\leq l_{x_n}<x_n<1$, and $r_{x_n}>x_n+b-a>l_{x_n}+b-a$. Moreover, $x_n-l_{x_n}\leq x_n-x_{n-1}\to 0$ as $n\to\infty$. Hence scenario (1) happens with positive probability implies that
$$
\eta(\{(z_1, z_2, z_3): z_1\in[z_2-\Delta,z_2), z_2\in [0,1), z_3\in (z_1+b-a,\infty)\})=\infty
$$
for any $\Delta>0$. In particular,
$$
\eta(\{(z_1, z_2, z_3): z_1\in[z_2-x_0+a,z_2), z_2\in [0,1), z_3\in (z_1+b-a,\infty)\})=\infty \text{ for any } x_0\in(a,b).
$$
Note that
\begin{align*}
& \{(z_1, z_2, z_3): z_1\in[z_2-x_0+a,z_2), z_2\in [0,1), z_3\in (z_1+b-a,\infty)\}\\
\subset & \{(z_1, z_2, z_3): z_1\in[-x_0+a,1), z_2\in (z_1, z_1+x_0-a], z_3\in (z_1+b-a,\infty)\}.
\end{align*}
Thus (\ref{e:leftexplode}) holds:
\begin{align*}
& \eta(\{(z_1, z_2, z_3): z_1\in[a,a+1), z_2\in (z_1, z_1+x_0-a], z_3\in (z_1+b-a,\infty)\})\\
= & \frac{1}{1+x_0-a}\eta(\{(z_1, z_2, z_3): z_1\in[-x_0+a,1), z_2\in (z_1, z_1+x_0-a], z_3\in (z_1+b-a,\infty)\})\\
= & \infty,
\end{align*}
where the first equality follows from Corollary \ref{cor:invariance}.
\end{proof}

As an application of Proposition \ref{prop:boundary} and Corollary \ref{cor:densityexplode}, consider the location of the path supremum of a stochastic process $\mathbf X=\{X(t)\}_{t\in\mathbb R}$ with continuous sample paths, formally defined as
$$
\tau_{\mathbf X, I}:=\inf\{t\in I: X(t)=\sup_{s\in I}X(s)\}.
$$
The infimum is used to choose the leftmost point among all the points where $\sup_{s\in I}X(s)$ is achieved, in the case where there are more than one such point. If we further assume that

\textbf{Assumption U}. For any $I\in\mathcal I$,
$$
P(\text{there exist } t_1, t_2\in I, t_1\neq t_2, \text{ such that }X(t_1)=X(t_2)=\sup_{s\in I}X(s))=0,
$$
\textit{i.e.}, the location of the path supremum is almost surely unique, then the infimum in the definition of $\tau_{\mathbf X, I}$ can be removed.

Most of the commonly used processes do satisfy Assumption U. It is proved in \cite{kim1990cube} that for a Gaussian process $\mathbf X$, Assumption U holds if and only if $Var(X(t)-X(s))\neq 0$ for any $s\neq t$. A necessary and sufficient condition for more general processes with continuous sample paths can be found in \cite{pimentel2014location}.

Note that in the case of the location of the path supremum, the random set $S$, as defined before Lemma \ref{lem:partialorder}, takes the form
$$
S=\{t: \text{ there exists }\Delta>0, \text{ such that }X(t)=\sup_{s\in[t-\Delta, t]}X(s) \text{ or } X(t)=\sup_{s\in[t, t+\Delta]}X(s)\},
$$
and the partial order $\preceq$ is the natural order for the value of the process $\{X(t)\}_{t\in \mathbb R}$.

\begin{corollary}\label{cor:max}
Let $\mathbf X=\{X(t)\}_{t\in\mathbb R}$ be a stochastic process with continuous sample paths and stationary increments. Assume $\mathbf X$ satisfies Assumption U. If the local maxima of $\mathbf X$ is dense in $[a,b]$ with positive probability, then the density of $\tau_{\mathbf X, I}$, denoted by $f$, satisfies $\lim_{t\downarrow a}f(t)=\infty$ or $\lim_{t\uparrow b}f(t)=\infty$.
\end{corollary}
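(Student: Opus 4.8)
The plan is to reduce Corollary~\ref{cor:max} to Corollary~\ref{cor:densityexplode} by verifying that, in the setting of the location of the path supremum, the event ``the local maxima of $\mathbf X$ are dense in $[a,b]$'' forces one of the four scenarios of Proposition~\ref{prop:explode} to occur with positive probability. First I would recall that, as noted just before the statement, for the path-supremum functional the set $S$ is exactly the set of $t$ that are a one-sided local maximum of $\mathbf X$, and the partial order $\preceq$ is the ordering by the value $X(t)$; in particular $x\preceq y$ iff $X(x)\le X(y)$ (using Assumption~U to rule out ties among relevant points). Under this dictionary, $l_x$ and $r_x$ are, respectively, the leftmost and rightmost extents to the left/right of $x$ beyond which the path first exceeds level $X(x)$; equivalently $X(x)=\sup_{s\in[l_x,r_x]}X(s)$ and these are the maximal such bounds. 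So a point $x$ lies in $S_1$ (after translating $[a,b]$ so that we look inside $[0,1)$) precisely when $x$ is a two-sided local max whose ``dominance interval'' $(l_x,r_x)$ has length exceeding $b-a$.

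Next I would argue that density of local maxima in $[a,b]$ gives $|S_1|=\infty$ on that event, which by Proposition~\ref{prop:explode} and Corollary~\ref{cor:densityexplode} yields the conclusion. The key observation is a separation/counting argument of the same flavour as in the proof of Proposition~\ref{prop:explode}: consider the record values of the path. If infinitely many two-sided local maxima in a compact sub-window had dominance intervals all of length at most $b-a$, their values would have to be ``used up'' by distinct excursions above increasingly high levels, and one can extract a monotone (in location) subsequence whose $\preceq$-order is monotone; along such a subsequence the dominance intervals nest or shift in a controlled way exactly as in Cases~1 and~2 of that proof, producing one of the scenarios (1)--(4). Concretely, pick an interval $[c,d]\subset(a,b)$ of length less than $b-a$ on which local maxima are dense (possible by shrinking); the strict maximum $M$ of $\mathbf X$ over $[c,d]$ is attained at a unique point $x^\ast$ by Assumption~U, and local maxima $x_n\to x^\ast$ from one side with $X(x_n)\uparrow$ or the dominance structure around $x^\ast$ then supply the required monotone sequence with $r_{x_n}-l_{x_n}\ge d-c$ eventually exceeding $b-a$ once the window is chosen with $d-c$ close enough to $b-a$ — here one uses that $x^\ast$ dominates all of $[c,d]$ so its one-sided approximants inherit long dominance intervals.

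The main obstacle I anticipate is making precise the claim that denseness of local maxima actually produces two-sided local maxima with long dominance intervals, rather than merely many one-sided local maxima with tiny dominance intervals (e.g.\ a nowhere-monotone path can have dense one-sided local maxima that are all immediately dominated nearby). The resolution should be to work near the \emph{leftmost global maximiser} $x^\ast$ of $\mathbf X$ over a well-chosen window: every local maximum in a small neighbourhood to the left of $x^\ast$ has value $<X(x^\ast)$, so its right-dominance bound $r_x$ reaches at least $x^\ast$, and by pushing the window's length toward $b-a$ and taking $x_n\uparrow x^\ast$ one forces $r_{x_n}-l_{x_n}$ to exceed $b-a$; Assumption~U guarantees $x^\ast$ is unique so these approximants are genuine local maxima lying strictly below level $X(x^\ast)$. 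Once the monotone $\preceq$-ordered sequence is in hand, identifying which of scenarios (1)--(4) holds is exactly the bookkeeping already carried out in Proposition~\ref{prop:explode}, and the density blow-up follows verbatim from Corollary~\ref{cor:densityexplode}.
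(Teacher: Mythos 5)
Your high-level plan --- reduce to Corollary~\ref{cor:densityexplode} by producing a monotone sequence realising one of the scenarios of Proposition~\ref{prop:explode} with positive probability --- agrees with the paper, and your translation of $S$, $\preceq$, $l_x$, $r_x$ into path language is correct. The gap is at the decisive step: forcing $r_{x_n}-l_{x_n}>b-a$. You pick a window $[c,d]\subset(a,b)$ of length \emph{strictly less than} $b-a$ and work near its unique maximiser $x^\ast$, but for any local maximum $x<x^\ast$ with $X(x)<X(x^\ast)$, the first dominator to the right satisfies $r_x\le x^\ast\le d$, not ``$r_x$ reaches at least $x^\ast$'' --- that inequality points the wrong way. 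Meanwhile nothing in your construction bounds $l_{x_n}$ from above by $c$ minus anything: the behaviour of the path on $(-\infty,c)$ is unconstrained, so $l_{x_n}$ can easily sit in $[c,x_n)$, leaving the dominance interval of $x_n$ inside $[c,d]$, of length $<b-a$. Your phrasing ``$r_{x_n}-l_{x_n}\ge d-c$ eventually exceeding $b-a$ once $d-c$ is close enough to $b-a$'' is also internally inconsistent: a lower bound by a quantity $d-c<b-a$ cannot yield $>b-a$. And the preceding ``separation/counting'' paragraph, which contemplates infinitely many two-sided maxima with dominance intervals all of length at most $b-a$, produces points that by definition are \emph{not} in $S_1$, so the contradiction you hope for does not materialise.

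What is missing is that the long dominance intervals have to be manufactured by looking \emph{outward}, not by zooming into a short window. After reducing to $a=0$, the paper takes $\tau'=\tau_{\mathbf X,[0,4b]}$, the maximiser over an interval of length $4b$; this automatically has $r_{\tau'}-l_{\tau'}\ge 4b$, so one side, say $r_{\tau'}-\tau'$, is $\ge 2b$. Stationarity of the increments (translation invariance of $\eta$) transports the existence of such a point into $S\cap[0,b)$. With $t_\infty=\tau_{\mathbf X,[0,b]}$ satisfying $r_{t_\infty}\ge 2b$ and an $\varepsilon$ chosen so that $\inf_{[t_\infty,t_\infty+\varepsilon]}X>\sup_{[b,2b]}X$, the sequence $t_n=\tau_{\mathbf X,[t_\infty+\varepsilon/(n+1),2b]}$ consists of points in $[t_\infty,b)$, each dominating all of $[b,2b]$, hence $r_{t_n}\ge 2b>t_n+(b-a)$; the density of local maxima then lets one perturb these to genuine local maxima with $l_{t'_n}<t'_n$, giving scenario (2) (or symmetrically (4)). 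Without the $4b$-window and the translation step, nothing prevents your dominance intervals from collapsing inside $[c,d]$, and the argument does not close.
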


\begin{proof}
By the stationarity of the increments, it suffices to prove the results for the case where $a=0$. Denote by $D$ the event that the local maxima of $\mathbf X$ is dense. Let $\tau'=\tau_{\mathbf X, [0,4b]}$, then $r_{\tau'}-l_{\tau'}\geq 4b$. Therefore, $P(D, r_{\tau'}-\tau'\geq 2b)>0$ or $P(D, \tau'-l_{\tau'}\geq 2b)>0$. Without loss of generality, assume that $P(D, r_{\tau'}-\tau'\geq 2b)>0$. As a result,
$$
P(D, \text{there exists }t\in S\cap [0,4b], r_t-t\geq 2b)>0,
$$
hence also
$$
P(D, \text{there exists }t\in S\cap [0,b), r_t-t\geq 2b)>0
$$
by the stationarity of the increments.

Let $t_\infty=\tau_{\mathbf X, [0,b]}$. From now on we focus on the event
$$
\{D, \text{ there exists }t\in S\cap [0,b), r_t-t\geq 2b\}.
$$
In this case, $t_\infty=\tau_{\mathbf X, [0,2b]}<b$, and $r_{t_\infty}\geq 2b$. By Assumption U, there exists $\epsilon\in(0,b-t_\infty)$, such that $\inf_{s\in [t_\infty, t_\infty+\epsilon]}X(s)>\sup_{s\in[b, 2b]}X(s)$. For $n=1,2,...$, let $t_n=\tau_{\mathbf X, [t_\infty+\frac{1}{n+1}\epsilon, 2b]}$. Then $t_n$ is a non-increasing sequence satisfying $\lim_{n\to\infty}t_n=t_\infty$, and $X(t_n)\leq X(t_{n+1})$ for all $n$. Moreover, since
$$
\sup_{s\in [t_\infty+\frac{1}{n+1}\epsilon, b]}X(s)\geq \sup_{s\in [t_\infty+\frac{1}{n+1}\epsilon, t_\infty+\epsilon]}X(s)>\sup_{s\in[b, 2b]}X(s),
$$
$t_n\in [t_\infty+\frac{1}{n+1}\epsilon, b]$, and $r_{t_n}\geq 2b\geq t_n+b$. By removing all equal terms in $\{t_n\}_{n=1,2,...}$ and all the terms in $\{t_n\}_{n=1,2,...}$ at which the values of $\mathbf X$ are equal, we get a decreasing sequence $\{t_n\}_{n=1,2,...}$, satisfying $\lim_{n\to\infty} t_n=t_\infty$ and $X(t_n)<X(t_{n+1})$, hence $t_n\preceq t_{n+1}$, for all $n$. Since the local maxima are dense and the sample paths are continuous, such a sequence can be approached by a sequence of local maxima $\{t'_n\}_{n=1,2,...}$, while all the properties derived above still hold. In addition, as all the points in the new sequence are local maxima, we have $l_{t'_n}<t'_n$, $n=1,2,...$. By the stationarity of the increments, this is scenario (2) in Proposition \ref{prop:explode}. Symmetrically, if $P(\tau'-l_{\tau'}\geq 2b)>0$, then scenario (4) in Proposition \ref{prop:explode} happens with positive probability.
\end{proof}

The following result is a direct application of Corollary \ref{cor:max} and Proposition \ref{prop:boundary}. It applies to Brownian motions, Ornstein-Uhlenbeck processes, or more generally, any process $\{X(t)\}_{t\geq 0}$ satisfying Assumption U and of the form
$$
X(t)=\int_0^t Y(s)dB_s,
$$
where $\{Y(t)\}_{t\geq 0}$ is a predictable stationary process which is independent of the standard Brownian motion $\{B_t\}_{t\geq 0}$, and for which the above stochastic integral is well-defined.

\begin{corollary}
Let $\mathbf X=\{X(t)\}_{t\geq 0}$ be a continuous semimartingale with stationary increments, satisfying Assumption U. Assume that the local martingale part of $\mathbf X$ almost surely is not constant on any interval. For any $I=[a,b]\in\mathcal I$, let $\tau_{\mathbf X, I}$ be defined as previously, and $f$ be its density on $(a,b)$. Then $P(\tau_{\mathbf X, I}=a)=P(\tau_{\mathbf X, I}=b)=0$, and $\lim_{t\downarrow a}f(t)+\lim_{t\uparrow b}f(t)=\infty$.
\end{corollary}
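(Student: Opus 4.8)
The plan is to push everything through a single statement about the local behaviour of $\mathbf X$ at a deterministic time, and then read off both assertions from Section 4. First note that $\tau_{\mathbf X,I}$ is the location of the path supremum of a process with continuous paths, stationary increments and Assumption U, so it is a stationary intrinsic random location; in particular Theorem \ref{thm:representation} produces its c\`adl\`ag density $f$ on $(a,b)$, and Corollary \ref{cor:max} and Proposition \ref{prop:boundary} are available. By stationarity of the increments we may take $a=0$.

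The technical heart is the lemma: \emph{if the continuous-local-martingale part $M$ of $\mathbf X$ is a.s.\ not constant on any interval, then for every deterministic time $t_0$ and every $\epsilon>0$, a.s.\ $\sup_{s\in(t_0,t_0+\epsilon)}X(s)>X(t_0)$, and moreover, if $t_0>0$, also $\sup_{s\in(t_0-\epsilon,t_0)}X(s)>X(t_0)$}; in words, a fixed time is a.s.\ not a one-sided local maximum of $\mathbf X$. To prove it I would write $\mathbf X=X(t_0)+M'+A'$ on $[t_0,\infty)$ with $M'$ a continuous local martingale and $A'$ continuous of finite variation, both vanishing at $t_0$. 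Since $M$ is not constant on any interval, $\langle M\rangle$ is strictly increasing, so $\langle M'\rangle_s>0$ for all $s>0$; by the Dambis--Dubins--Schwarz time change $M'_s=\gamma_{\langle M'\rangle_s}$ for a Brownian motion $\gamma$, whose oscillation over $[0,s]$ is of exact order $\sqrt{\langle M'\rangle_s}$. On the other hand, because $\mathbf X$ has stationary increments the total-variation measure of $A$ is, near any fixed time, a.s.\ absolutely continuous with locally integrable density --- its singular part is carried by a Lebesgue-null random set, which by stationarity together with Fubini a.s.\ avoids $t_0$ --- so $|A'_s|=O(s)$ as $s\downarrow 0$; and likewise, by stationarity of the increments, $\langle M'\rangle_s$ cannot shrink too fast at $t_0$, so that $\sqrt{\langle M'\rangle_s}$ stays above $|A'_s|$ along a subsequence $s_n\downarrow0$. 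Feeding these estimates into the law of the iterated logarithm for $\gamma$ at $0$ produces $s_n\downarrow0$ with $\gamma_{\langle M'\rangle_{s_n}}>|A'_{s_n}|$, hence $X(t_0+s_n)>X(t_0)$; the left-sided statement follows by applying the same reasoning to $s\mapsto X(t_0-s)$. \textbf{This domination estimate --- that the Brownian oscillation of the martingale part beats the displacement of the finite-variation part at arbitrarily small scales about a deterministic time --- is the step that requires care, and it is exactly where the two hypotheses (stationary increments, and $M$ not constant on any interval) enter; without stationary increments it genuinely fails.}

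Granting the lemma, the rest is routine. Applying it at $t_0=p$ and $t_0=q$ shows that, for every rational subinterval $(p,q)\subset(0,b)$, the supremum of $\mathbf X$ over $[p,q]$ is a.s.\ attained at an interior point, which is then a local maximum of $\mathbf X$; intersecting over the countably many rational pairs gives that a.s.\ the local maxima of $\mathbf X$ are dense in $[0,b]$. This holds with probability one, hence with positive probability, so Corollary \ref{cor:max} yields $\lim_{t\downarrow a}f(t)=\infty$ or $\lim_{t\uparrow b}f(t)=\infty$, and in either case $\lim_{t\downarrow a}f(t)+\lim_{t\uparrow b}f(t)=\infty$. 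For the endpoint atoms, $\{\tau_{\mathbf X,I}=a\}\subseteq\{X(s)\le X(a)\ \forall s\in(a,a+\epsilon)\}$, which has probability $0$ by the lemma (right-hand statement at $t_0=a$), and $\{\tau_{\mathbf X,I}=b\}\subseteq\{X(s)\le X(b)\ \forall s\in(b-\epsilon,b)\}$, which has probability $0$ by the lemma (left-hand statement at $t_0=b$); alternatively one may quote Proposition \ref{prop:boundary}, the lemma forcing the sets $S_{l,b-a}$ and $S_{r,b-a}$ to be Lebesgue-null almost surely. Hence $P(\tau_{\mathbf X,I}=a)=P(\tau_{\mathbf X,I}=b)=0$, which completes the proof.
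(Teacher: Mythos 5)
The strategy you follow is the same one the paper uses---show that $\mathbf X$ a.s.\ has dense local maxima so that Corollary~\ref{cor:max} applies, and show that a fixed time cannot a.s.\ be a one-sided local maximum of $\mathbf X$ so that Assumption~U forbids $\tau_{\mathbf X,I}$ from sitting at $a$ or $b$---but you route both conclusions through a single LIL-based lemma, whereas the paper splits them. For the density explosion the paper argues purely deterministically: since the local martingale part has strictly increasing bracket, $\mathbf X$ is a.s.\ of unbounded variation on every interval, and a continuous function of unbounded variation on every interval has dense local extrema (if it had no local extremum on some $(u,v)$ it would be strictly monotone there, hence of bounded variation). No Dambis--Dubins--Schwarz, no Lebesgue differentiation, no LIL. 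For the endpoint atoms the paper simply asserts that $a$ is a.s.\ an accumulation point of the level set $\{t:X(t)=X(a)\}$ and concludes via Assumption~U; this is the classical fact you are trying to prove. So your proposal is a more unified but also technically heavier substitute: you get a stronger intermediate statement (a fixed time is a.s.\ not a one-sided local max) at the cost of analytic estimates the paper never needs.

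There is, however, a genuine gap in your lemma, precisely at the step you flag as delicate. You invoke Lebesgue differentiation, Fubini and stationarity of increments to get $|A'_s|=O(s)$ at a fixed $t_0$; that part is fine (it works even if the finite-variation measure has a singular part, because a.s.\ $t_0$ is a Lebesgue point). But the companion claim that ``$\langle M'\rangle_s$ cannot shrink too fast at $t_0$, so that $\sqrt{\langle M'\rangle_s}$ stays above $|A'_s|$ along a subsequence'' is not justified by the same reasoning. Stationarity of increments plus Lebesgue differentiation only gives that $\langle M'\rangle_s/s \to D(t_0)$ a.s.\ for some finite $D(t_0)\ge0$; nothing rules out $D(t_0)=0$ with positive probability, in which case $\langle M'\rangle_s = o(s)$. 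The hypothesis ``$M$ is a.s.\ not constant on any interval'' only says $\langle M\rangle$ is strictly increasing, not that its derivative is bounded away from zero, and a strictly increasing increment-stationary process with a substantial singular part is not obviously excluded. If $\langle M'\rangle_s$ decays faster than any power of $s$, the LIL oscillation $\sqrt{2\langle M'\rangle_s\log\log(1/\langle M'\rangle_s)}$ no longer dominates the $O(s)$ displacement of $A'$, and the lemma's conclusion does not follow from the comparison you set up. You would need an additional argument showing that at a fixed time the bracket grows at least polynomially along a subsequence, or replace the LIL comparison by a different route (for example, via the support of the local time of $X$ at level $X(a)$). The paper avoids the issue for the density-explosion half by not using any probabilistic estimate at all, and for the atom half by asserting rather than proving the level-set statement; your attempt to make the latter rigorous exposes exactly this missing ingredient.
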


\begin{proof}
Since $\mathbf X$ is a semimartingale and has a local martingale part which is nowhere flat, it is of unbounded variation over any interval, hence the local maxima and the local minima of $\mathbf X$ are almost surely dense in any interval. Thus, Corollary \ref{cor:max} applies. Moreover, since $a$ is almost surely an accumulation point, both from the left and from the right, of the level set $\{t\in\mathbb R: X(t)=X(a)\}$, for any $\epsilon>0$ there exists $t\in(a,a+\epsilon]$ such that $X(t)\geq X(a)$. If the equality holds for all such $t\in(a,b]$, then Assumption U is violated. Hence almost surely there exists $t\in (a,b]$ such that $X(t)> X(a)$. Thus,
$P(\tau_{\mathbf X, I}=a)=0$. The case for the right boundary $b$ is symmetric.
\end{proof}

\section*{Acknowledgement} Jie Shen acknowledges financial support from the China Scholarship Council. Yi Shen acknowledges financial support from the Natural Sciences and Engineering Research Council of Canada (RGPIN-2014-04840).

\bibliographystyle{apalike}
\bibliography{Bibtexp}

\end{document}